\newcommand\pref[1]{\prettyref{#1}}
\newtheorem{pro}{Proposition}
\newtheorem{cor}[pro]{Corollary}
\newtheorem{lem}[pro]{Lemma}
\newtheorem{thm}[pro]{Theorem}
\newtheorem{con}[pro]{Conjecture}
\newtheorem{prerem}[pro]{Remark}
\newtheorem{predefin}[pro]{Definition}
\newenvironment{rem}{\begin{prerem}\rm}{\hfill $\blacktriangle$\end{prerem}}
\DeclareMathOperator{\lcm}{lcm}
\begin{document}

\title{\textbf{On the packing for triples 
}}
\author{
		Ramin Javadi%
		\thanks{Corresponding author, Department of Mathematical Sciences,
			Isfahan University of Technology,
			P.O. Box: 84156-83111, Isfahan, Iran. School of Mathematics, Institute for Research in Fundamental Sciences (IPM), P.O. Box: 19395-5746,
			Tehran, Iran.  Email Address: \href{mailto:rjavadi@cc.iut.ac.ir}{rjavadi@cc.iut.ac.ir}.}
		\thanks{This research was in part supported by a grant from IPM (No. 95050079).}
		\and 
	Ehsan Poorhadi%
	\thanks{Department of Mathematics,
		Isfahan University of Technology,
		P.O. Box: 84156-83111, Isfahan, Iran. Email Address: \href{mailto:poorhadiehsan70@gmail.com}{poorhadiehsan70@gmail.com}.} 
	\and
		Farshad  Fallah%
	\thanks{Department of Mathemathics,
		Isfahan University of Technology,
		P.O. Box: 84156-83111, Isfahan, Iran. Email Address: \href{mailto:farshad\_1991@yahoo.com}{farshad\_1991@yahoo.com}.} 
}
\date{}
\maketitle
\begin{abstract}
For positive integers $n\geq k\geq t$, a collection $ \mathcal{B} $ of $k$-subsets of an $n$-set $ X $ is called a $t$-packing if every $t$-subset of  $ X $ appears in at most one set in $\mathcal{B}$. In this paper, we give some upper and lower bounds for the maximum size of $3$-packings when $n$ is sufficiently larger than $k$. In one case, the upper and lower bounds are equal, in some cases, they differ by at most an additive constant depending only on $k$ and in one case they differ by a linear bound in $ n $. 
\\
\begin{itemize}
\item[]{{\footnotesize {\bf Key words:}\ Balanced incomplete block design, Triple packing, Group divisible design, Large set.}}
\item[]{ {\footnotesize {\bf Subject classification:} 05B40, 05C70.}}
\end{itemize}
\end{abstract}
\section{Introduction}

Given positive integers  $n,k,t, \lambda $, where $ n\geq k\geq t $, a $ t-(n,k,\lambda) $ \textit{packing} is a pair $ (X,\mathcal{B}) $, where $ X $ is a $ n$-set of  elements (called points) and $ \mathcal{B} $ is a collection of $ k$-subsets of $ X $ (called blocks) such that every $ t$-subset of  $ X $ is contained in at most $ \lambda $ blocks in $ \mathcal{B} $. If each $ t$-subset of  $ X $ appears in exactly $ \lambda $ blocks, then it is called a $ t-(n,k,\lambda) $ \textit{design}. Keevash \cite{keevash}, in a seminal work, establishing a  well-known long-standing conjecture, proved that the obvious necessary conditions for the existence of $t$-designs are also sufficient when $n$ is sufficiently large with respect to $k$ and $\lambda$. When these conditions are not met, we may ask for the maximum size of a $t$-packing. \textit{The packing number} $ D_\lambda(n,k,t) $ is the maximum number of blocks in a $ t-(n,k,\lambda) $ packing.
 A $ t-(n,k,\lambda) $ packing is called \textit{optimal} if it contains exactly $ D_\lambda(n,k,t) $ blocks. 
 In this paper, we focus on the case $ \lambda=1 $ and we write $ D(n,k,t) $ for $ D_1(n,k,t) $. Constructing optimal or near-optimal packings has several applications in different fields such as computer science, cryptography and constant-weight codes in coding theory. 

As an elementary observation, one may verify that $ D(n,k,t) $ satisfies the following recurrent relation (see \cite{johnson}),
\begin{equation} \label{eq:recursion}
D(n,k,t) \leq \min\left\{\left\lfloor \frac{n}{k}\, D(n-1,k-1,t-1)\right\rfloor,\left\lfloor \frac{n}{n-k}\, D(n-1,k,t)\right\rfloor\right\}. 
\end{equation}
By iterating the first part of the above relation, one may obtain the following upper bound for $ D(n,k,t) $ known as Johnson's bound.
\begin{equation} \label{eq:johnson}
D(n,k,t) \leq J(n,k,t) := \left\lfloor \frac{n}{k} \left\lfloor \frac{n-1}{k-1} \cdots \left\lfloor \frac{n-t+1}{k-t+1}\right\rfloor \right \rfloor \right \rfloor.
\end{equation}

It was conjectured by Erd\H{o}s and Hanani \cite{erdoshanani} that the upper bound in \eqref{eq:johnson} is asymptotically tight, in the sense that for every fixed integers $ k,t  $ with $ k\geq t $,
\begin{equation}\label{eq:asym}
\lim_{n\to \infty} \frac{D(n,k,t)}{J(n,k,t)} =1. 
\end{equation}
This conjecture was proved by R\"{o}dl in a breakthrough work \cite{rodl}. His idea of a random construction known as ``nibble" has have a great influence on the development of combinatorics and the similar ideas have been recently applied within the Keevash's proof for the existence of  $t$-designs. In fact, the main result of Keevash \cite{keevash}, where the existence of $t$-designs is one of its consequences, is essentially a probabilistic construction for decomposition of locally dense hypergraphs (or more general structures called complexes) into complete hypergraphs (see \pref{thm:keevash}). In one point of view, his result is a generalization of the Gustavsson's decomposition theorem which asserts that graphs with large minimum degree which satisfy necessary divisibility conditions can be decomposed into complete graphs (cliques).

Proving \eqref{eq:asym} by R\"{o}dl determines the asymptotic behavior of $ D(n,k,t) $ completely. This gives rise to the question that what the exact value of $ D(n,k,t) $ is. This question is of great importance in both theoretical and applied considerations. Since there are some irregularities in small values of $ n $, it is natural to ask for an explicit formula for $ D(n,k,t) $ when $ n $ is sufficiently large. This question was answered for $ t=2 $ by Caro and Yuster \cite{caro,caro2}.  In fact, they proved 
that for $t=2$ and sufficiently large $n$, equality holds in Recursion \eqref{eq:recursion}. In other words, for every positive integer $k$ and sufficiently large integer $n$,

\begin{align} \label{eq:t=2}
 D(n,k,2)=\begin{cases}
\left\lfloor \dfrac{n}{k} \left\lfloor \dfrac{n-1}{k-1}\right\rfloor \right\rfloor & \parbox{6.3cm}{{if either } $(n-1)\not\equiv 0 \pmod {(k-1)}$, {or } $n(n-1)\equiv 0\pmod {(k(k-1))}$,}\\[10pt]
\left\lfloor \dfrac{n(n-1)}{k(k-1)} \right\rfloor-1 & \text{otherwise.}
\end{cases} 
\end{align}

In order to prove this, they mainly work out a suitable minimum ``leave graph'' for each case and decompose its complement (which is dense enough) into cliques using Gustavsson's Theorem. 

The case $t=3$ is much more complicated and a limited number of efforts have been done. The difficulty of the case $t=3$ is due to the fact that we have to construct suitable (possibly small) leave hypergraphs and in some cases such hypergraphs do not exist. This causes the fact that for $t=3$, equality  fails to hold in Recursion~\eqref{eq:recursion}  (see e.g. \eqref{eq:bo1}). Even in some cases, they happen to differ by a lower bound in $\Omega(n)$ (see \pref{thm:p2}).   

In this paper, we are going to investigate the case $ t=3 $ and provide appropriate upper and lower bounds for $ D(n,k,3) $, when $ n $ is large enough (with respect to $ k $). The case $(k,t)=(4,3)$ has been already solved in the literature. In fact, it is proved in \cite{bao} that for every $n\geq 4$,

\[ D(n,4,3)=
\begin{cases}
\left \lfloor \dfrac{n}{4} \left \lfloor \dfrac{n-1}{3} \left \lfloor \dfrac{n-2}{2} \right\rfloor \right\rfloor \right \rfloor & n\not \equiv 0 \pmod 6, \\[10pt]
\left \lfloor \dfrac{n}{4} \left(\left\lfloor \dfrac{(n-1)(n-2)}{6}  \right\rfloor-1\right) \right\rfloor& n\equiv 0 \pmod 6,
\end{cases}
\]
which shows that equality holds in \eqref{eq:recursion} for $(k,t)=(4,3)$.

Our main results can be summarized as follows. For every positive integers $ n,k $, let us define 
	\[J'(n,k,3):=\left\lfloor\frac{n}{k} \left(\left\lfloor \frac{(n-1)(n-2)}{(k-1)(k-2)} \right\rfloor-1\right) \right\rfloor. \]

\begin{thm}\label{thm:main}
Let $ k\geq 4 $ be a fixed integer. There exists a constant $ n_0=n_0(k) $ such that  for every $ n\geq n_0 $,
\begin{itemize}
	\item[\rm (i)]  if $ (n-2)\not\equiv 0 \pmod {k-2} $, then
	 $ D(n,k,3)=J(n,k,3) $. 
	\item[\rm (ii) ]  if $ (n-2)\equiv 0 \pmod {k-2} $, $ (n-1)(n-2)\equiv 0\pmod {(k-1)(k-2)} $ and  $ n(n-1)(n-2)\not\equiv 0 \pmod {k(k-1)(k-2)} $, then 
	\begin{equation} \label{eq:bo1}
	J(n,k,3)-O(k)\leq D(n,k,3)\leq J(n,k,3)-3.
	\end{equation}
	\item[\rm (iii)] In the last case, i.e. when $ (n-2)\equiv 0 \pmod {k-2} $, $ (n-1)(n-2)\equiv  p(k-2)\pmod {(k-1)(k-2)} $, for some integer $p$, $1\leq p\leq k-2$, we have 
\begin{itemize}
	\item If neither $k,p\equiv 4\pmod 6$ nor $k, (p-2)\equiv 0 \pmod 6$, then we have
		\begin{equation} \label{eq:bo2}
J'(n,k,3)-O(k^{3+o_k(1)})\leq  D(n,k,3)\leq J'(n,k,3), 
\end{equation}
	\item otherwise, i.e. if either $k,p\equiv 4\pmod 6$ or $k,(p-2)\equiv 0 \pmod 6$, then we have 
		\begin{equation} \label{eq:bo3}
J'(n,k,3)-O(k^{o_k(1)})n\leq D(n,k,3)\leq J'(n,k,3).
\end{equation}
\end{itemize}
\end{itemize}
Moreover, there are infinitely many $n,k$ such that equalities hold in the upper bounds in \eqref{eq:bo1}, \eqref{eq:bo2} and \eqref{eq:bo3} $($see Remarks~\ref{rem:equality} and \ref{rem:equality2}$)$. In addition, dependency of the lower bound on $n$ in \eqref{eq:bo3} is necessary at least for $p=2$ $($see \pref{thm:p2}$)$.
\end{thm}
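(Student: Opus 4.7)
The overall strategy is to separate upper and lower bounds. Upper bounds will follow by applying the recursion \eqref{eq:recursion} once (to reduce $t = 3$ to $t = 2$) and then invoking the Caro--Yuster formula \eqref{eq:t=2}; the three cases in the theorem correspond to the three arithmetic regimes one encounters in this reduction. Lower bounds will be produced constructively: for each regime I would build a 3-uniform \emph{leave} hypergraph $L$ on $[n]$ whose complement $K_n^{(3)} \setminus L$ decomposes into copies of $K_k^{(3)}$, yielding a packing of size $(\binom{n}{3} - e(L))/\binom{k}{3}$. The existence of such a decomposition is certified by Keevash's theorem \pref{thm:keevash}, provided (a) $L$ is sparse, and (b) in $K_n^{(3)} \setminus L$ every vertex lies in a multiple of $\binom{k-1}{2}$ triples and every pair in a multiple of $k-2$ triples. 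The lower bound therefore reduces to minimising $e(L)$ subject to these divisibility constraints.

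For the upper bounds, case (i) is the Johnson bound \eqref{eq:johnson} itself. For case (ii), apply \eqref{eq:recursion} once; the hypothesis $(n-1)(n-2) \equiv 0 \pmod{(k-1)(k-2)}$ places the derived 2-packing in the main branch of \eqref{eq:t=2}, so $D(n-1,k-1,2) = J(n-1,k-1,2)$, and a residue calculation using $n(n-1)(n-2) \not\equiv 0 \pmod{k(k-1)(k-2)}$ forces the outer floor in \eqref{eq:recursion} to drop by exactly $3$. For case (iii), the hypothesis on $(n-1)(n-2) \bmod (k-1)(k-2)$ puts the derived 2-packing in the exceptional branch of \eqref{eq:t=2}, giving $D(n-1,k-1,2) = J(n-1,k-1,2) - 1$; substituting into \eqref{eq:recursion} yields precisely $J'(n,k,3)$.

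For the lower bounds, case (i) requires only a constant-size correction gadget to fix residues before applying Keevash's theorem. Case (ii) needs a leave $L$ of size $O(k)$: the residue mismatch is bounded by a function of $k$ alone, and one can explicitly construct a small configuration of triples on $O(k)$ points that absorbs it while preserving the divisibility conditions. For the non-exceptional sub-case of (iii), I would place a near-optimal 2-packing (via Caro--Yuster) on one vertex link and then group the resulting uncovered pairs into triples through a constant-order Steiner triple system or group-divisible design; when $k$ and $p$ avoid the bad residues modulo $6$, such a system exists on $O(k)$ vertices, yielding $e(L) = O(k^{3+o_k(1)})$.

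The main obstacle is the exceptional sub-case of (iii), where $k \equiv 4 \pmod 6$, or $k \equiv 0 \pmod 6$, together with the matching condition on $p$. These modular restrictions are exactly the obstruction to the existence of Steiner triple systems of the natural orders, so the triple-grouping strategy above fails. One is forced to replace the constant-order STS by a linear-in-$n$ family of auxiliary structures (essentially a large-set-style partition of the uncovered pairs), which inflates $e(L)$ to $O(k^{o_k(1)})\,n$. That this linear dependence cannot be avoided, at least when $p = 2$, is the content of \pref{thm:p2}. Verifying Keevash's divisibility conditions for such a large leave, and squeezing out the subpolynomial dependence on $k$, is where the most delicate work lies.
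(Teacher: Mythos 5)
Your decomposition into upper and lower bounds matches the paper's architecture, and your lower-bound philosophy (build a sparse divisible leave, finish with Keevash) is the right one. But there is a concrete gap in your upper bound for case (ii). Under the hypotheses of (ii) we have $(n-2)\equiv 0\pmod{k-2}$ and $(n-1)(n-2)\equiv 0\pmod{(k-1)(k-2)}$, so Caro--Yuster \eqref{eq:t=2} puts $D(n-1,k-1,2)$ in its \emph{main} branch and gives $D(n-1,k-1,2)=\tfrac{(n-1)(n-2)}{(k-1)(k-2)}$ exactly; feeding this into \eqref{eq:recursion} yields only
$D(n,k,3)\le\bigl\lfloor \tfrac{n(n-1)(n-2)}{k(k-1)(k-2)}\bigr\rfloor = J(n,k,3)$.
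The non-divisibility $n(n-1)(n-2)\not\equiv 0\pmod{k(k-1)(k-2)}$ is already what makes the floor nontrivial; it cannot make the floor "drop by exactly $3$" --- a floor never loses more than a fractional part. The bound $J(n,k,3)-3$ is genuinely stronger than anything the recursion can produce, and the paper has to prove it by a separate structural argument (\pref{thm:upper_r0a0b!0}): assuming $D(n,k,3)\ge J(n,k,3)-2$, \pref{lem:multigraph3} forces the existence of a leave multigraph with only $\beta+2k(k-1)(k-2)$ edge-endpoints, all degrees divisible by $(k-1)(k-2)$, all multiplicities divisible by $k-2$, and a distinct $K_3$-decomposition; a case analysis on the edge multiplicities ($=k-2$, $=2(k-2)$, $\ge 3(k-2)$) then shows no such multigraph exists. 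Your proposal contains no substitute for this nonexistence argument.

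Two smaller points. First, your leaves should be phrased not merely as sparse $3$-graphs but as $3$-graphs arising from distinct $K_3$-decompositions of multigraphs with prescribed degree and multiplicity residues (this is exactly \pref{lem:multigraph3}); the multigraph formulation is what makes the divisibility bookkeeping tractable. Second, in case (iii) the real work is not "a constant-order Steiner triple system": one needs several building blocks $G^{g,l}$ coming from simple $(3,k-2)$-GDDs (\pref{cor:simpleGDD}) whose orders $n_1,n_2,n_3$ must simultaneously satisfy degree congruences mod $k$, a residue condition absorbing $\beta$, and a coprimality condition so that $r_1n_1+r_2n_2=n-n_3$ is solvable; arranging all of this requires a generalized Chinese-Remainder argument (\pref{lem:number}) and is where the $O(k^{3+o_k(1)})$ versus $O(k^{o_k(1)})n$ dichotomy actually arises. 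Your sketch gestures at the right objects but does not address how to reconcile these simultaneous congruences.
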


These results have been established in two steps. First, using Keevash's decomposition theorem, we prove that the existence of a $3$-packing for large enough $n$ is equivalent to the existence of an appropriate multigraph. Then, using the results on large sets of group divisible designs, we construct such appropriate multigraphs.   

The organization of the paper is as follows. In \pref{sec:tools}, we gather all required preliminaries and tools including Keevash's decomposition theorem and simple GDDs. In \pref{sec:1}, we prove Case (i) of \pref{thm:main}. In \pref{sec:2}, we handle Case (ii) of \pref{thm:main}. In \pref{sec:3}, we delve into Case (iii) of \pref{thm:main}. Finally, in \pref{sec:conc}, we give some concluding remarks for future work and some open problems. 

\section{Preliminaries} \label{sec:tools}
In this section, we explore some definitions and necessary tools that are required for the proof of our main results. 

\subsection{Distinct Decompositions}
A \textit{multigraph} is a pair $ G=(V(G),E(G)) $, where $ V(G) $ is a set of vertices and $ E(G) $ is a multiset of unordered pairs of $ V(G) $ called edges (no loop is allowed). For a pair of distinct vertices $ x,y\in V(G) $, the multiplicity of $ xy $, denoted by $ m_G(xy) $, is the number of appearance of $ xy $ in $ E(G) $ (so, if $x$ and $y$ are nonadjacent, then $m_G(xy)=0$). The multigraph on $n$ vertices where the multiplicities of all pairs of vertices is equal to $\mu$ is denoted by $\mu K_n$.  The \textit{degree} of a vertex  $ x $ in $G $ is defined as $\deg_G(x)= \sum_{y:y\neq x} m_G(xy) $.
For every $x\in V(G)$, the set of vertices which are adjacent to $x$ is denoted by $N_G(x)$ and $N_G[x]$ stands for $N_G(x)\cup \{x\}$.
For a multigraph $ G $, its underlying graph denoted by $ \tilde{G} $, is a simple graph with vertex set $ V(G) $ where $ xy\in E(\tilde{G}) $ if and only if $m_G(xy)\geq 1 $. 

Given a multigraph $ G $ and an integer $ q\geq 3 $, a \textit{distinct $K_q $-decomposition} of $ G $ is a set $ \mathcal{C} $ of distinct $ q$-subsets of $ V(G) $ (called $q$-cliques) such that for every pair of distinct vertices $x,y\in V(G)  $, they are both contained in exactly $ m_G(xy)  $ cliques in $ \mathcal{C} $. 

A \textit{hypergraph} is a pair $G= (V(G),\mathcal{E}(G)) $ where $ V(G) $ is a set of vertices and $ \mathcal{E}(G) $ is a set of subsets of $ V(G) $ called \textit{hyperedges} (or simply \textit{edges}) of $ G $. If all edges of $ G $ have the equal size $ t $, then $G$ is called a  \textit{$ t $-uniform hypergraph} or shortly a \textit{$ t $-graph}. The $ t $-graph $ G $ on $ n $ vertices containing all $ t $-subsets of $V(G)$ as edges is called the complete $ t $-graph and is denoted by $ K_n^{(t)} $.  For every set of vertices $ S\subseteq V(G) $, the degree of $ S $ in $ G $, denoted by $ d_G(S) $, is the number of edges in $ G $ containing $ S $, i.e. 
\[d_G(S)=|\{e\in \mathcal{E}(G) \ : \ S\subseteq e \}|. \]
Given a $ t $-graph $ G $ and an integer $ q> t $, a \textit{$K_q^{(t)}$-decomposition} of $ G $ is a collection $ \mathcal{C} $ of copies of $K_q^{(t)}$ in $ G $ such that every edge of  $ G $ is contained in exactly one of these copies. A $ t $-graph $ G $ is called \textit{$ (q,t) $-divisible} if for every $ 0\leq i\leq t-1 $ and every subset $ S\subseteq V(G) $ with $ |S|=i $, $ \dbinom{q-i}{t-i} $ divides $ d_G(S) $. The obvious necessary condition for $ G $ to have a $ K_q^{(t)} $-decomposition is that $ G $ should be $ (q,t) $-divisible.

The complement of a $t$-graph $ G $, denoted by $ \overline{G} $, is a $ t$-graph  on the same set of vertices $ V(G) $ such that for every $ t$-subset $ e $ of $ V(G) $, we have $ e\in \mathcal{E}(\overline{G}) $ if and only if $ e\not\in \mathcal{E}(G)  $. 

The following theorem is the minimum degree version of the Keevash's decomposition theorem which can decompose the hypergraphs with large minimum degree (see Theorem~1.4 in \cite{keevash}, also see Theorem~1.1 in \cite{kuhn}). 
\begin{thm}\label{thm:keevash}{\rm\cite{keevash}}
For all integers $ q>t\geq 2 $, there exists integer $ n_0$ and number $c>0 $ such that the following holds for every $ n\geq n_0 $. 
Let $ G $ be a $ t$-graph on $ n $ vertices such that for every $ S\subseteq V(G) $, with $ |S|=t-1 $, we have $ d_G(S)\geq (1-c)n $. If $ G $ is $ (q,t)$-divisible, then $ G $ admits a $ K_q^{(t)}$-decomposition.   
\end{thm}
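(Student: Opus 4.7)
The plan is to attack this via the absorbing method combined with R\"odl's nibble, following the three-stage framework at the heart of Keevash's original proof (and the later iterative-absorption proof of Glock, K\"uhn, Lo, and Osthus). The overall strategy is to first reserve an absorbing structure, then run a nibble to cover almost all of the remaining edges, and finally use the absorber to finish the decomposition. A preprocessing step will select a small constant $c>0$ and a threshold $n_0$ large enough to accommodate all of the concentration inequalities below.

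For the first stage I would reserve a collection $\mathcal{A}$ of pairwise almost-disjoint copies of $K_q^{(t)}$ in $G$ with the following key property: for every sufficiently small, \emph{typical}, $(q,t)$-divisible leave $L \subseteq G \setminus \bigcup_{C \in \mathcal{A}} \binom{C}{t}$, one can swap $\mathcal{A}$ for another family of cliques so that $L$ together with the edges previously covered by $\mathcal{A}$ admits a $K_q^{(t)}$-decomposition. For the second stage, after removing $\bigcup \mathcal{A}$, I would apply the Frankl--R\"odl near-perfect-matching theorem to the auxiliary $\binom{q}{t}$-uniform hypergraph whose vertices are the remaining edges of $G$ and whose hyperedges are the edge-sets of $K_q^{(t)}$-copies in what remains. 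The minimum-codegree hypothesis $d_G(S) \geq (1-c)n$ for every $(t-1)$-set $S$ translates into near-regularity and vanishing codegree of this auxiliary hypergraph, so the nibble outputs a packing of $K_q^{(t)}$-copies covering all but a tiny leave $L$. Because every $q$-clique contributes $\binom{q-i}{t-i}$ edges through each $i$-set, $(q,t)$-divisibility is preserved in $L$, and the absorbing property of $\mathcal{A}$ then completes the decomposition of $G$.

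The main obstacle — and the reason the statement is nontrivial — is the construction of the absorber: it must be flexible enough to swallow an arbitrary divisible leave, yet sparse enough that its removal preserves the density and regularity needed by the nibble. Keevash handles this through an algebraic construction based on designs in vector spaces (the so-called \emph{templates}), coupled with a delicate cover-down induction that corrects divisibility defects at successively finer scales; the Glock--K\"uhn--Lo--Osthus alternative replaces the algebraic gadgets with a \emph{vortex} of nested subsets $V(G) = U_0 \supseteq U_1 \supseteq \cdots \supseteq U_\ell$ together with iterated absorption pushing the leave down the vortex. In either approach, most of the technical weight lies in ensuring that the probabilistic estimates from the nibble, the algebraic (or combinatorial) absorption gadgets, and the divisibility conditions remain compatible through all iterations, and it is this alignment — rather than any single step — that accounts for the length of the original proofs.
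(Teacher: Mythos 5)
This statement is not proved in the paper at all: it is quoted verbatim as Theorem~1.4 of Keevash's paper (and Theorem~1.1 of Glock--K\"uhn--Lo--Osthus), and the authors simply import it as a black box to convert their leave-multigraph constructions into packings via Lemma~\ref{lem:multigraph3}. So there is no ``paper's own proof'' to compare against, and any honest assessment has to judge your proposal as a standalone attempt at one of the deepest recent results in combinatorics.

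Read that way, your outline is an accurate map of the known proofs, but it is not a proof. The nibble stage is genuinely routine given the codegree hypothesis (the Frankl--R\"odl / Pippenger--Spencer machinery does apply to the auxiliary $\binom{q}{t}$-uniform hypergraph of clique copies, and near-regularity plus small codegrees do follow from $d_G(S)\geq(1-c)n$). The entire difficulty, however, sits in the sentence where you say the absorber ``must be flexible enough to swallow an arbitrary divisible leave'' and then refer to Keevash's algebraic templates or the GKLO vortex for how this is done. That is precisely the content of the theorem: one must (i) construct a bounded-degree reserved structure whose removal keeps the host dense, (ii) show that an \emph{arbitrary} $(q,t)$-divisible leave --- not merely a ``typical'' one, a qualifier you insert without justification --- can be reduced to a canonical form the absorber handles, which requires the cover-down/boosting induction through all uniformities $i=t-1,t-2,\ldots,0$, and (iii) verify that divisibility is exactly preserved at every scale. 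None of these steps is carried out or even reduced to a checkable claim in your proposal. If your goal was to reprove Theorem~\ref{thm:keevash}, the gap is essentially the whole argument; if your goal was to explain why the paper may legitimately cite it, the outline is fine, but then you should say explicitly that you are invoking \cite{keevash} rather than proving it.
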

The Keevash's decomposition theorem can also be applied to decompose multigraphs and multi-hypergraphs in certain conditions. The following is a corollary of Theorem 6.5 in \cite{keevash}. 

\begin{lem}\label{lem:multigraph}{\rm\cite{keevash}}
Let $ q, \lambda$ be positive integers where  $ q\geq 3 $. Then, there exist constants $ n_0 $ and $ c $  (depending on $ q,\lambda $) such that the following holds for all $ n\geq n_0 $. Let $G$ be a multigraph on $n$ vertices satisfying the following conditions,
\begin{itemize}
	\item $ G $ is $ (q,2)$-divisible, i.e. $ \dbinom{q}{2} $ divides $ |E(G)| $ and $ q-1 $ divides the degree of all vertices,
	\item for every pair of vertices $ x,y $, we have $ m_G(xy)\in \{0,\ldots, \lambda\} $, and for every vertex $x$, $ |\{y\ : \ m_G(xy)< \lambda \}|\leq cn $.
\end{itemize}
Then, $ G $ admits a distinct $ K_q $-decomposition.
\end{lem}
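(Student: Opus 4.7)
The plan is to derive this lemma from Keevash's general decomposition theorem for complexes (Theorem 6.5 of \cite{keevash}), by encoding the desired distinct decomposition as a design in a suitable $(q,2)$-complex. The key point is that Keevash's framework handles not only simple hypergraph decompositions but, more generally, decompositions of weighted complexes, which is precisely what is needed to model a multigraph together with the distinctness constraint on the resulting cliques.

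First, I would build an auxiliary complex $\Phi$ on the vertex set $V(G)$. The top level at dimension $q$ is taken to be all of $\binom{V(G)}{q}$, reflecting the fact that each $q$-subset is allowed to appear at most once in the decomposition, which is how distinctness gets enforced. At dimension $2$, each pair $xy$ is weighted so that the target number of $q$-cliques of the design containing $xy$ equals $m_G(xy)$; pairs with $m_G(xy)=0$ receive weight $0$. Under this setup, an integral design for $\Phi$ whose clique support is a set (rather than a multiset) of $q$-subsets of $V(G)$ is exactly a distinct $K_q$-decomposition of $G$.

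Next, I would verify the hypotheses of Keevash's theorem for $\Phi$. The $(q,2)$-divisibility hypothesis of the present lemma yields the integrality and divisibility conditions required on the dimension-$2$ weights and on the degrees at dimension~$1$. The assumption that each vertex has at most $cn$ neighbors $y$ with $m_G(xy)<\lambda$ guarantees that $\Phi$ is sufficiently dense in the sense of \cite{keevash}: both the codegree at dimension~$1$ and the number of extensions to $q$-cliques are close to their maximum possible values, provided $c$ is chosen small enough in terms of $q$ and $\lambda$. Applying Theorem 6.5 of \cite{keevash} then produces the desired distinct $K_q$-decomposition directly.

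The main obstacle is not combinatorial but notational: translating between the multigraph formulation above and the complex/design language of \cite{keevash}, and verifying Keevash's technical regularity and extendability conditions for the chosen $\Phi$. I expect the bulk of the work to be bookkeeping of this translation, with no new combinatorial idea required once the complex is correctly set up.
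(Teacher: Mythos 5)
Your proposal follows essentially the same route as the paper: the paper states this lemma without proof, presenting it precisely as a corollary of Theorem~6.5 of \cite{keevash}, which is exactly the reduction you sketch (encoding the multigraph as a weighted complex with all $q$-subsets available at the top level to enforce distinctness). Since the paper supplies no further argument here, your outline of the translation and the density/divisibility checks is consistent with, and no less detailed than, what the paper itself provides.
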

In the above lemma,  the second condition says that a small fraction of edges on each vertex could have multiplicity less than  $ \lambda $. However, we need to find decompositions for some multigraphs for which the dominant multiplicity is not necessarily the largest one. Thus, we prove a slight extension of \pref{lem:multigraph} as follows. 
\begin{lem}\label{lem:multigraph2}
	Let $ q, \lambda$ and $ \lambda' $ be positive integers where  $ q\geq 3 $ and $ \lambda\leq \lambda' $. Then, there exist constants $ n_0 $ and $ c $  (depending on $ q,\lambda' $) such that the following holds for all $ n\geq n_0 $. Let $G$ be a multigraph on $n$ vertices satisfying the following conditions,
	\begin{itemize}
		\item $ G $ is $ (q,2)$-divisible, and
		\item for every pair of vertices $ x,y $, we have $ m_G(xy)\in \{0,\ldots, \lambda'\} $, and for every vertex $x$, $ |\{y\ : \ m_G(xy)\neq \lambda \}|\leq cn $.
	\end{itemize}
	Then, $ G $ admits a distinct $ K_q $-decomposition.
\end{lem}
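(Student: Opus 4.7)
The plan is to reduce \pref{lem:multigraph2} to \pref{lem:multigraph} by removing a small collection of distinct $q$-cliques that absorbs every pair of multiplicity greater than $\lambda$. Set $B^+=\{\{x,y\}:m_G(xy)>\lambda\}$ and $B^-=\{\{x,y\}:m_G(xy)<\lambda\}$; by hypothesis every vertex lies in at most $cn$ pairs of $B^+\cup B^-$. First I will build a collection $\mathcal{C}_0$ of pairwise distinct $q$-cliques such that each $\{x,y\}\in B^+$ is contained in exactly $m_G(xy)-\lambda$ cliques of $\mathcal{C}_0$, and no pair in $B^-$ lies in any clique of $\mathcal{C}_0$. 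Deleting the edges used by $\mathcal{C}_0$ (with multiplicity) from $G$ produces a residual multigraph $G'$ with $m_{G'}(xy)\in\{0,\dots,\lambda\}$ everywhere: $m_{G'}(xy)=\lambda$ on every former heavy pair, $m_{G'}(xy)=m_G(xy)$ on every light pair, and $m_{G'}(xy)\in\{\lambda-k,\dots,\lambda\}$ at a few ``typical'' pairs that happen to lie in some $\mathcal{C}_0$-clique. Since removing whole cliques preserves $(q,2)$-divisibility, $G'$ is $(q,2)$-divisible; a short count also gives $|\{y:m_{G'}(xy)\neq\lambda\}|=O(cn)$ for every $x$. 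Hence \pref{lem:multigraph}, applied with a suitably small constant, yields a distinct $K_q$-decomposition $\mathcal{C}_1$ of $G'$, and $\mathcal{C}_0\cup\mathcal{C}_1$ is the desired decomposition of $G$, provided the two collections are disjoint.

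To construct $\mathcal{C}_0$ I process the heavy pairs greedily. For each $\{x,y\}\in B^+$ I must choose $m_G(xy)-\lambda\le\lambda'-\lambda$ distinct $q$-cliques through $\{x,y\}$ whose other pairs currently have multiplicity exactly $\lambda$, so that deletion never drives a multiplicity below $0$ and never touches $B^-$. The admissible fillers $\{v_1,\dots,v_{q-2}\}$ must avoid the at most $O(cn)$ bad neighbors of $x$ and $y$, and the induced $(q-2)$-set must carry no bad pair; a standard counting argument then shows that the pool of admissible $(q-2)$-sets has size $\Omega(n^{q-2})$, far exceeding the eventual total $|\mathcal{C}_0|\le(\lambda'-\lambda)\,|B^+|=O(cn^2)$, so the greedy process succeeds and the cliques it produces are automatically distinct. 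Balancing the filler choices keeps the number of $\mathcal{C}_0$-cliques through any fixed vertex bounded by $O(cn)$, which is exactly what is needed to verify the ``pseudorandomness'' hypothesis of \pref{lem:multigraph} for $G'$.

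The principal obstacle is ensuring $\mathcal{C}_0\cap\mathcal{C}_1=\emptyset$, since \pref{lem:multigraph} is stated as a pure existence result and gives no a priori control over which cliques appear in $\mathcal{C}_1$. I will resolve this by invoking the stronger form underlying it, namely Theorem~6.5 of \cite{keevash}, which permits one to prescribe a small list of ``forbidden'' cliques to be avoided by the decomposition; applied to the list $\mathcal{C}_0$ of size $O(cn^2)\ll\binom{n}{q}$, the theorem still supplies a distinct $K_q$-decomposition $\mathcal{C}_1$ of $G'$, now automatically disjoint from $\mathcal{C}_0$, and $\mathcal{C}_0\cup\mathcal{C}_1$ gives the decomposition claimed in \pref{lem:multigraph2}.
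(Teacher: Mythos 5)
Your overall strategy is the same as the paper's: greedily peel off distinct $q$-cliques through the heavy pairs so that the residual multigraph satisfies the hypotheses of \pref{lem:multigraph}. But there is a genuine gap at the point you yourself flag as the principal obstacle. You reduce each heavy pair only down to multiplicity $\lambda$, so the cliques of $\mathcal{C}_0$ live on pairs that are still present in $G'$, and nothing prevents the decomposition $\mathcal{C}_1$ of $G'$ from reusing one of them. Your proposed repair is to invoke a ``forbidden cliques'' feature of Theorem~6.5 of \cite{keevash}; that theorem is not stated with such a feature, you give no argument that it can be derived, and the whole proof hinges on it. As written, the disjointness of $\mathcal{C}_0$ and $\mathcal{C}_1$ is unproved. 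The fix is elementary and is exactly what the paper does: delete cliques through each heavy pair $\{x,y\}$ until its multiplicity reaches $0$ rather than $\lambda$ (this is still at most $\lambda'$ cliques per pair, and $m_{G'}(xy)=0\neq\lambda$ is permitted by the hypotheses of \pref{lem:multigraph}). Then every clique of $\mathcal{C}_0$ contains a pair of multiplicity $0$ in $G'$, so it cannot occur in any distinct $K_q$-decomposition of $G'$, and disjointness is automatic with no strengthening of the black box.

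A secondary remark: the assertion that ``balancing the filler choices keeps the number of $\mathcal{C}_0$-cliques through any fixed vertex bounded by $O(cn)$'' is where essentially all the work in the paper's proof lies (choosing each filler to minimize its number of prior appearances, and then a delicate double count showing that only $O(cn)$ vertices are ever invalid and that no vertex is used more than $O(cn)$ times). Your sketch asserts this but does not supply the argument; the naive bound $|\mathcal{C}_0|=O(cn^2)$ does not by itself prevent a single vertex from being chosen $\Theta(n^{2})$ times, which would destroy both the validity of later greedy steps and the degree condition needed to apply \pref{lem:multigraph} to $G'$. This part needs to be carried out, not just invoked.
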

\begin{proof}
Fix a number $ c $ and let $ G $ be a multigraph satisfying the above conditions. We are going to remove some distinct $ q$-cliques from $ G $ such that the remained multigraph fulfills the conditions of \pref{lem:multigraph}. For this, we have to reduce the multiplicities which are larger than $ \lambda $. For every vertex $ x $, define
\[\Gamma_x=\{y\ :\ m_G(xy)>\lambda \}, \]
where by definition, we have $ |\Gamma_x|\leq cn $.

Consider an ordering of $ V(G) $, as $ x_1,\ldots, x_n $ and sequentially do the following for each $ 1\leq i\leq n  $,
\begin{itemize}
	\item \textbf{Step $ i $.} For every vertex $ x\in \Gamma_{x_i} $, if there is an edge between $ x $ and $ x_i $ in $ G$, choose additional vertices $ y_1,\ldots, y_{q-2} $ consecutively to form a $ q$-clique $ K=\{x_i,x,y_1,\ldots, y_{q-2}\} $, as follows. Assume that the vertices $ y_1,\ldots, y_{j-1} $ are chosen and we are going to choose $ y_j $. We say that a vertex $ y $ is \textit{valid} if $ y $ is adjacent to all vertices $ x_i,x, y_1,\ldots,y_{j-1} $ and also when $ j=q-2 $, the clique $ \{x_i,x, y_1,\ldots,y_{j-1},y\} $ is not chosen before. Now, choose a valid vertex $ y_j $ whose number of appearance in previously chosen cliques is minimal among all valid vertices. When all vertices $ y_1,\ldots,y_{q-2} $ are chosen, remove all edges of the $ q$-clique $ K $ from $ G $ (in fact, the multiplicities of the pairs in $ K $ are reduced by one). Iterate this until there is no edge between $ x $ and $ x_i $ in $ G $.
\end{itemize}

We are going to prove that the above procedure continues until all edges between the vertex $ x_i $ and the vertices in $ \Gamma_{x_i} $ are removed, for all $ 1\leq i\leq n $. 
Suppose that in step $ i $, the vertices $ y_1,\ldots, y_{j-1} $ are chosen and we are going to choose the vertex $ y_j $.  Fix a vertex $ u$ in $L= \{x_i,x,y_1,\ldots, y_{j-1}\} $ and suppose that $ Y $ is the set of all vertices $ y $ in $ V(G)\setminus L $ which are nonadjacent to $ u $ (and so are not valid). 
We are going to prove that $ |Y|\leq c'cn $ for some constant $ c'=c'(q,\lambda') $.

Let $ Y' $ be the set of all vertices $ y $ in $ Y $ such that either $ y$ is nonadjacent to $ u $ in the original graph $ G $, or  the last edge between $ y $ and $ u $ is removed in some step $ i' $ and $ u\in  \{x_{i'}\}\cup \Gamma_{x_{i'}} $. Then, we have $ |Y'|\leq cn+ \lambda'cn(q-1) $ (because $ |\Gamma_{u}|\leq cn $ and for every $ u'\in \Gamma_{u} $, $ uu' $ is contained in at most $ \lambda' $ chosen cliques and each of these cliques contains $q-1$ vertices except $ u $).   

Now, let $ Y''=Y\setminus Y' $. For every $ y\in Y'' $, let $ K^y $ be the last chosen $ q$-clique containing $ u$ and $y $ (they could be the same for some $ y $'s). Note that,  since $ y\not\in Y' $, $ u $ is added to $ K^y $ as additional vertices. For every vertex $ w\in V(G) $, let $ Y_w $ be the set of all vertices $ y\in Y'' $ for which $ w $ is valid at the time that $ u $ is adding to $ K^y $. Fix a vertex $ w\in V(G) $ and let $ y_0\in Y_w$ be such that $ K^{y_0} $ is the last chosen clique among the cliques $ K^y, y\in Y_w $. At the time that  $ u $ is adding to $ K^{y_0} $, $ w $ is valid and $ u $ has already been chosen at least $ (|Y_w|-1)/(q-1)$ times (because $ u $ is contained in all cliques $ K^y $, $ y\in Y_w\setminus \{y_0\} $ and these cliques are the same for at most $ (q-1) $ vertices $ y $). Thus, $ w $ has also been chosen at least $ (|Y_w|-1)/(q-1)$ times (since $ u $ is chosen instead of $ w $). Moreover, let $ \overline{Y_w}= Y''\setminus Y_w $. For every vertex $ y\in \overline{Y_w} $, $ w $ is nonadjacent to a vertex in $ K^y $. Thus, there exist at least $ |\overline{Y_w}|/(\lambda'(q-1))-cn  $ vertices nonadjacent to $ w $ which are adjacent to $ w $ in the original graph $ G $ (because $ w $ is nonadjacent to at most $ cn $ vertices in the original $ G $, the cliques $ K^y $ are the same for at most $ (q-1) $ vertices $ y $ and every vertex except $ u $ appears in at most $ \lambda' $ distinct cliques $ K^y $). This shows that $ w $ is contained in at least $ |\overline{Y_w}|/(\lambda'(q-1)^2)-cn/(q-1)  $ chosen cliques. 
These two facts imply that every vertex appears in at least $ c''(|Y''|-cn) $ chosen cliques for some constant $ c''=c''(q,\lambda') $. Thus, there are at least $ c''(|Y''|n-cn^2)/q $ chosen cliques. On the other hand, at the end of the procedure, at most $ \lambda'cn^2 $ cliques are chosen (because $ |\Gamma_x|\leq cn $ for all vertices). Therefore, $ |Y''|\leq  cn(\lambda'q+c'')/c''$. This implies that $ |Y|\leq c'cn $ for some constant $ c'=c'(q,\lambda') $. Also, in the case that $ j=q-2 $, there are at most $ \lambda' $ vertices $ y $ such that the clique $ L\cup \{y\} $ has been chosen before. Thus, in each step, there exist at least $ (1-qc'c)n-\lambda' $ valid vertices. So, as long as $ n\geq n_0 $ for some constant $ n_0 $, the procedure does not stop until step $n $.  

Let  $ G'$ be the multigraph obtained from $G $ by performing the above procedure. It is clear that $ m_{G'} (xy)\leq \lambda $ for all pairs $ x,y $. Also, since $ G $ is $ (q,2)$-divisible and $ G' $ is obtained from $ G $ by removing some $ q$-cliques of $ G $, $ G' $ is also $ (q,2)$-divisible. Also, note that in the above procedure, the degree of every vertex is reduced by at most $ \hat{c} cn $, for some constant $ \hat{c} $. To see this, let $ x $ be a vertex in $ G $ and assume that $ x $ is appeared in $ l $ chosen cliques. So, $ x $ appears in at least $ l-\lambda' cn $ cliques as an additional vertex. At the time that $ x $ is added to the last clique, there are at least $ (1-qc'c)n-\lambda' $ valid vertices whose number of appearance in previously chosen cliques is at least $ l-\lambda' cn $. Therefore, the number of chosen cliques is at least $ (l-\lambda' cn)((1-qc'c)n-\lambda')/q $. On the other hand, the number of chosen cliques is at most $ \lambda'cn^2 $. Thus, $l\leq \hat{c} cn $, for some constant $ \hat{c} $. Hence, $ |\{y\ :\ m_{G'}(xy)<\lambda\}|\leq (1+\hat{c})cn $. Consequently, $ G' $ fulfills the conditions of \pref{lem:multigraph} and admits a distinct $K_q$-decomposition. This decomposition along with the chosen cliques in the above procedure comprise a  distinct $K_q$-decomposition for $ G $. (Note that the decomposition of $ G' $ does not contain any of the cliques chosen by the procedure, because there is no edge between every vertex $ x $ and a vertex in $ \Gamma_x $ in $ G' $.)
\end{proof}
We also need the following theorem about the decomposition of complete multigraphs into triangles.
\begin{thm}\label{thm:triangle}{\rm \cite{dehon}}
Suppose that $ n,\lambda $ are positive integers. Then, $ \lambda K_n $ admits a distinct $ K_3 $-decomposition if and only if $ \lambda(n-1) $ is even, $ \lambda n(n-1) $ is divided by $ 3 $ and $ \lambda \leq n-2 $.
\end{thm}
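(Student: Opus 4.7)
The plan is to handle necessity by elementary counting and sufficiency by splitting into two regimes: a ``generic'' regime where $n$ is large relative to $\lambda$ and one can invoke \pref{lem:multigraph2}, and a ``boundary'' regime where $\lambda$ approaches $n-2$ and finer design-theoretic tools are needed.

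\textbf{Necessity} is immediate from three counting arguments. Each vertex $x$ of $\lambda K_n$ has degree $\lambda(n-1)$, and every triangle through $x$ uses exactly two of the edges incident to $x$, so $\lambda(n-1)$ must be even. Counting edge--triangle incidences gives that the number of triangles is $\lambda n(n-1)/6$, forcing $3\mid \lambda n(n-1)$. For the last condition, fix a pair $\{x,y\}$: it has multiplicity $\lambda$, hence appears in $\lambda$ triangles of the decomposition; by distinctness, these triangles have $\lambda$ pairwise different ``third vertices'' drawn from $V\setminus\{x,y\}$, so $\lambda\le n-2$.

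\textbf{Sufficiency, easy regime.} Suppose first that $n\ge n_0(\lambda)$, where $n_0$ is the threshold given by \pref{lem:multigraph2} with $q=3$ and $\lambda'=\lambda$. Every pair in $\lambda K_n$ has multiplicity exactly $\lambda$, so the set $\{y : m_G(xy)\neq \lambda\}$ is empty for every vertex $x$; the second hypothesis of \pref{lem:multigraph2} is then satisfied trivially. Since the two divisibility conditions ``$\lambda(n-1)$ even'' and ``$3\mid \lambda n(n-1)$'' are exactly $(3,2)$-divisibility for a complete multigraph, the lemma applies and delivers a distinct $K_3$-decomposition. In this regime nothing beyond \pref{thm:keevash} is needed.

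\textbf{Sufficiency, boundary regime.} The genuinely delicate case is when $\lambda$ is close to $n-2$, because then $n$ is not large compared to $\lambda$ and Keevash-type bootstrapping no longer applies. My approach would be induction on $\lambda$ with base case $\lambda=1$, which is Kirkman's classical existence theorem for Steiner triple systems ($n\equiv 1,3\pmod 6$). For the inductive step, write $\lambda K_n = (\lambda-1)K_n + K_n$ and seek distinct $K_3$-decompositions of the two summands whose triangle sets are disjoint. The main obstacle is exactly this disjointness, which becomes extremely tight as $\lambda\to n-2$, since then the total triangle count equals $\binom{n}{3}$ and the decomposition must in fact use every $3$-subset of $V$ exactly once, leaving no slack. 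To enforce disjointness I would use cyclic development and difference-family constructions over a group of order $n$ (or $n-1$ with a fixed point), choosing base blocks so that their shifts fall in pairwise distinct orbits; combined with PBD-closure arguments, this reduces the problem to a finite list of small admissible orders handled by explicit construction. This is precisely the route taken by Dehon~\cite{dehon}.
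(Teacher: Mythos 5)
The paper does not prove this statement at all: it is imported verbatim from Dehon~\cite{dehon}, so there is no internal proof to compare against. Judged on its own, your necessity argument is complete and correct, and your ``easy regime'' is fine: for $n\ge n_0(\lambda)$ the multigraph $\lambda K_n$ satisfies the hypotheses of \pref{lem:multigraph2} (indeed already of \pref{lem:multigraph}, since every multiplicity equals $\lambda$), and $(3,2)$-divisibility is exactly the two congruence conditions. The problem is everything else.

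First, your case split does not cover the statement. The complement of ``$n\ge n_0(\lambda)$'' is not ``$\lambda$ close to $n-2$'': the threshold $n_0(\lambda)$ coming from Keevash-type machinery grows with $\lambda$, so for, say, $\lambda=\lfloor n/2\rfloor$ you are in neither of your two regimes, and nothing in the proposal addresses this middle range. (A complementation remark would help at the top end: the family of all $3$-subsets decomposes $(n-2)K_n$, and removing a distinct decomposition of $\lambda K_n$ from it leaves one of $(n-2-\lambda)K_n$; so $\lambda$ near $n-2$ reduces to $\lambda$ near $0$. But this still leaves intermediate $\lambda$ untouched.) Second, the inductive step $\lambda K_n=(\lambda-1)K_n+K_n$ fails on divisibility grounds: the admissibility of $\lambda K_n$ does not imply that of $(\lambda-1)K_n$ or of $K_n$ (e.g.\ $n\equiv 5\pmod 6$ with $\lambda=6$ is admissible, while neither $K_n$ nor $5K_n$ is), so one must peel off $2K_n$, $3K_n$ or $6K_n$ according to $n\bmod 6$, and you do not do this. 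Third, the genuinely hard content of Dehon's theorem --- producing the required number of pairwise \emph{distinct} triples when $\lambda$ is large --- is exactly the step you defer to ``cyclic development, difference families and PBD-closure'' without constructing anything; asserting that this ``is precisely the route taken by Dehon'' is a citation, not a proof. As it stands the proposal proves the theorem only for $n$ sufficiently large in terms of $\lambda$, which is weaker than the stated result.
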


\subsection{Connection of Packing and Decomposition}
Here, we show how constructing a suitable leave multigraph and using \pref{thm:keevash} can lead to the construction of an appropriate $3$-packing.

\begin{lem}\label{lem:multigraph3}
For every positive integers $ n,k,\xi$, if $ D(n,k,3)\geq \xi$, then there exists a multigraph $ G $ satisfying the following conditions. 
\begin{itemize}
	\item[\rm (i)] $|V(G)|=n$ and $2|E(G)|= n(n-1)(n-2)-k(k-1)(k-2) \xi$. 
	\item[\rm (ii)] For every vertex $ x\in V(G) $, $\deg_G(x)=(n-1)(n-2) \pmod {(k-1)(k-2)}$.
	\item[\rm (iii)] For every pair of vertices $ x,y\in V(G) $, $ m_G(xy)= n-2 \pmod{k-2} $.
	\item[\rm (iv)] The graph $ G $ has a distinct $ K_3$-decomposition. 
\end{itemize}
In addition, for every positive integers $ k,\sigma $, there exists integer $ n_0=n_0(k,\sigma) $ such that for every integer $ n\geq n_0 $, if there exists a multigraph $ G $ satisfying Conditions (i)-(iv), as well as the following condition
\begin{itemize}
	\item[\rm (v)] For every pair of vertices $ x,y\in V(G) $, $ m_G(xy)\leq \sigma$,
\end{itemize}
 then $ D(n,k,3)\geq \xi$.
\end{lem}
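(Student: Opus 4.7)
I will prove both directions through one bijection-like device: the \emph{triangle multigraph} of the leave. Given a $3$-packing of $X$ with $\xi$ blocks, the multigraph $G$ on $X$ obtained by replacing each uncovered $3$-subset $\{a,b,c\}$ with the triangle on $\{a,b,c\}$ will satisfy (i)--(iv); conversely, once (v) is added, a $3$-packing can be recovered from any such $G$ by invoking Keevash's decomposition theorem.

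For the forward direction, fix a $3$-packing $(X,\mathcal{B})$ with $|\mathcal{B}|=\xi$ and let $G$ be the triangle multigraph of its leave. Writing $r_x$ (respectively $r_{xy}$) for the number of blocks through $x$ (respectively through $\{x,y\}$), elementary double counting yields
\[
2|E(G)| = n(n-1)(n-2) - k(k-1)(k-2)\xi,\qquad \deg_G(x) = (n-1)(n-2) - r_x(k-1)(k-2),\qquad m_G(xy) = (n-2) - r_{xy}(k-2),
\]
which gives (i)--(iii). Condition (iv) is immediate: the per-triple triangles form a partition of $E(G)$ into distinct $3$-subsets of $V(G)$, which is exactly a distinct $K_3$-decomposition.

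For the reverse direction, assume $G$ satisfies (i)--(v) and let $\mathcal{L}$ be the set of triples forming a distinct $K_3$-decomposition of $G$. Define $H$ to be the simple $3$-graph on $V(G)$ whose edges are all $3$-subsets of $V(G)$ \emph{not} in $\mathcal{L}$; the plan is to apply \pref{thm:keevash} to find a $K_k^{(3)}$-decomposition of $H$, whose $k$-sets then form a $3$-packing of $X$ of size $|\mathcal{E}(H)|/\tbinom{k}{3}=\xi$ (using (i) and $|\mathcal{L}|=|E(G)|/3$). The two hypotheses of \pref{thm:keevash} will follow from (v) and the arithmetic conditions: the bound (v) together with $n\geq n_0$ give the codegree estimate $d_H(\{x,y\})=(n-2)-m_G(xy)\geq n-2-\sigma\geq (1-c)n$, and the identities
\[
d_H(\{x,y\}) = (n-2)-m_G(xy),\qquad d_H(x) = \tbinom{n-1}{2}-\tfrac{1}{2}\deg_G(x),\qquad |\mathcal{E}(H)| = \tbinom{n}{3}-\tfrac{1}{3}|E(G)|,
\]
combined with (iii), (ii), (i) respectively, show that $k-2\mid d_H(\{x,y\})$, $\tbinom{k-1}{2}\mid d_H(x)$, and $\tbinom{k}{3}\mid |\mathcal{E}(H)|$, which is exactly $(k,3)$-divisibility of $H$.

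The main obstacle is minor: once the triangle multigraph is identified as the right intermediary, the argument reduces to bookkeeping. All the depth is concentrated in the invocation of \pref{thm:keevash}, whose codegree hypothesis is precisely what forces condition (v) into the reverse statement, while the arithmetic requirements (i)--(iii) are calibrated exactly to match $(k,3)$-divisibility.
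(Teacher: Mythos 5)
Your proposal is correct and follows essentially the same route as the paper: the "triangle multigraph of the leave" is exactly the paper's construction of $G$ from the complement $\overline{H}$ of the packing's $3$-graph, and the reverse direction likewise takes the complement of the $K_3$-decomposition and applies Keevash's theorem, with condition (v) supplying the codegree hypothesis and (i)--(iii) supplying $(k,3)$-divisibility. The bookkeeping identities you record are all accurate, so no gap remains.
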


\begin{proof}
Suppose that $ D(n,k,3)\geq \xi$. So, there exists a $ 3-(n,k,1) $ packing with $ \xi$ blocks. One may consider such a packing as a $ 3$-graph $ H $ on $ n $ vertices whose edges are triples which appear in the blocks of the packing. Also, $ H $ admits a $ K_k^{(3)} $-decomposition. So, $ H $ is $ (k,3)$-divisible. Now, the multigraph $ G $ on $ n $ vertices is obtained from $ \overline{H} $, the complement of $ H $, as follows. We set $ V(G)=V(H) $ and for each pair of vertices $ x,y\in V(G) $, $ m_G(xy) $ is equal to $ \deg_{\overline{H}}(\{x,y\}) $. Now, we check that Conditions (i)-(iv) holds.

First, since $ H $ is $ (k,3)$-divisible, for every pair of vertices $ x,y\in V(G) $, we have $ m_G(xy)= d_{\overline{H}} (\{x,y\})= n-2-d_{H} (\{x,y\})=n-2 \pmod{k-2}  $. Thus, Condition (iii) holds. Also, we have $ \deg_G(x)= 2\deg_{\overline{H}}(\{x\}) $, so Condition (ii) follows from $ (k,3) $-divisibility of $ H $ in a similar way. Also, Condition (iv) holds because $ E(\overline{H}) $ can be considered as a distinct $ K_3 $-decomposition of $ G $. Finally, note that $ |E(G)|=3|E(\overline{H})|= 3 (\binom{n}{3}- |E(H)|)= 3 (\binom{n}{3}- \xi\binom{k}{3}) $. Thus, Condition (i) holds.

Now, we prove the converse. Suppose that $ G $ is a multigraph satisfying Conditions (i)-(v). Consider the $ K_3 $-distinct decomposition of $ G $ as a $ 3$-graph on the vertex set $ V(G) $ and let $ H $ be its complement. It is clear that $ H $ is $ (k,3)$-divisible. Let $ n_0 $ and $ c $ be the numbers in \pref{thm:keevash}. Due to Condition (v), for every pair of vertices $ x,y $, we have $\deg_H(\{x,y\}) \geq n-2-\sigma  $. Now, let $ n_1 $ be an integer such that $ cn_1\geq \sigma+2  $. So, for every $ n\geq n_1 $, we have  $\deg_H(\{x,y\}) \geq (1-c)n  $. Hence, for every $ n\geq \max\{n_0,n_1\} $, $ H $ fulfills the conditions of \pref{thm:keevash} and thus has a  $ K_k^{(3)}$-decomposition. This forms a $3-(n,k,1)$ packing of size $ |E(H)|/\binom{k}{3}=\xi$. Therefore, $ D(n,k,3)\geq \xi$.
\end{proof}

\subsection{Simple GDDs}
One of the important combinatorial notions that has been used in this paper is the group divisible design.  
Let $v, k, \lambda $ be positive integers and $ U $ be a set of positive integers. A \textit{group divisible
design} of index $ \lambda $ and order $ v $ denoted by $(k,\lambda)$-GDD, is a triple $(X, \mathcal{G}, \mathcal{B})$, where $ X $ is a set of size $ v $, $ \mathcal{G} $ is a partition of $ X $ into groups whose sizes lie in $ U $,  where $ |\mathcal{G}|>1 $ and $ \mathcal{B} $ is a family of $ k$-subsets (blocks) of $ X $ such that every pair of distinct elements of $ X $ occurs in exactly $ \lambda $ blocks or one group, but not both.

When $ U=\{g_1,\ldots, g_s\} $ and there are exactly $ u_i $ groups of size $ g_i $, $ 1\leq i\leq s $, where $ v=\sum_{i=1}^s g_iu_i $, we say that $ (k,\lambda) $-GDD is of type $ g_1^{u_1}\ldots g_s^{u_s} $ and for simplicity we write $ (k,\lambda) $-GDD$ (g_1^{u_1}\ldots g_s^{u_s} ) $.
 A GDD is said to be \textit{simple} if no two blocks are identical. 

The following result determines the necessary and sufficient conditions for the existence of a simple $ (3,\lambda) $-GDD$ (1^u) $ i.e. a triple system. 
\begin{thm} {\rm \cite{dehon}} \label{thm:simpleTS}
There exists a simple $ (3,\lambda) $-GDD$ (1^u) $ if and only if $ 1\leq \lambda \leq u-2 $, $ \lambda(u-1) \equiv 0 \pmod 2 $ and $ \lambda u(u-1) \equiv 0 \pmod 6 $.
\end{thm}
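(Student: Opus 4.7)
The plan is to observe that a simple $(3,\lambda)$-GDD$(1^u)$ is literally the same combinatorial object as a distinct $K_3$-decomposition of the multigraph $\lambda K_u$, so the result is an immediate translation of \pref{thm:triangle} (Dehon's theorem) with $n=u$. Hence the role of this proof is essentially definitional bookkeeping rather than any new argument.

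First, I would unpack the GDD definition. In a $(3,\lambda)$-GDD of type $1^u$, the point set $X$ has size $u$ and the groups form the partition of $X$ into its $u$ singletons. Since any two distinct points of $X$ automatically lie in different groups, the defining property of a GDD forces each such pair to occur in exactly $\lambda$ blocks. The blocks are $3$-subsets of $X$, and simplicity says that no two blocks coincide. Identifying blocks with triangles on vertex set $X$, this is exactly a family of pairwise distinct triples in which every pair of vertices is covered $\lambda$ times, i.e.\ a distinct $K_3$-decomposition of the multigraph $\lambda K_u$.

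Second, I would quote \pref{thm:triangle} directly: a distinct $K_3$-decomposition of $\lambda K_u$ exists if and only if $\lambda(u-1)$ is even, $3\mid \lambda u(u-1)$, and $\lambda\leq u-2$. The two divisibility conditions combine into the single congruence $\lambda u(u-1)\equiv 0\pmod 6$ (note that $2\mid \lambda(u-1)$ iff $2\mid \lambda u(u-1)$, since $u$ and $u-1$ have opposite parity), while $\lambda\leq u-2$ matches the bound in the statement. The trivial lower bound $\lambda\geq 1$ is needed only to make the block set nonempty, otherwise any $u\geq 1$ works vacuously. So both implications are immediate once the equivalence of the two objects is noted.

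There is essentially no obstacle to overcome: the substance of the result lies in Dehon's theorem already cited as \pref{thm:triangle}, and the only point of care is verifying that ``no two blocks identical'' in the GDD definition really does correspond to ``distinct cliques'' in the decomposition, which is immediate from the identification. Consequently the proof reduces to one short paragraph: restate the object as a distinct triangle decomposition of $\lambda K_u$ and invoke \pref{thm:triangle}.
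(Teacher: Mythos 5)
Your identification is correct and is essentially all there is to say: the paper gives no proof of this statement (both it and Theorem~\ref{thm:triangle} are quoted from Dehon \cite{dehon}), and a simple $(3,\lambda)$-GDD$(1^u)$ is indeed the same object as a distinct $K_3$-decomposition of $\lambda K_u$, since singleton groups force every pair to lie in exactly $\lambda$ blocks. One correction, though: your parenthetical claim that $2\mid \lambda(u-1)$ iff $2\mid \lambda u(u-1)$ is false --- $u(u-1)$ is always even, so $2\mid \lambda u(u-1)$ holds unconditionally, while $\lambda(u-1)$ can be odd (e.g.\ $\lambda=1$, $u=4$, where $\lambda u(u-1)=12\equiv 0\pmod 6$ but $\lambda(u-1)=3$). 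Hence the two divisibility conditions of Theorem~\ref{thm:triangle} do \emph{not} collapse into the single congruence $\lambda u(u-1)\equiv 0\pmod 6$; the parity condition $\lambda(u-1)\equiv 0\pmod 2$ must be retained separately, exactly as the statement of Theorem~\ref{thm:simpleTS} does. Since the statement keeps both conditions, your overall argument still goes through --- the two condition sets coincide because $6\mid\lambda u(u-1)$ is equivalent to $3\mid\lambda u(u-1)$ --- but the aside as written asserts a false equivalence.
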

Two $(k,\lambda)$-GDD$(g^u)$s are said to be disjoint if they have the same set of groups and their
block sets are disjoint.  A \textit{large set} of GDDs denoted by  $(k,\lambda)$-LGDD($ g^u $) is a collection $(X, \mathcal{G}, \mathcal{B}_i)_{i\in I}$ of pairwise disjoint simple $(k,\lambda)$-GDD$(g^u)$s,
all having the same set of groups, such that for every $k$-subset $ B $ of $ X $, if $ |B\cap G|\leq 1 $,
for all $G \in \mathcal{G}$, then there is exactly one $i\in I$ where $ B\in \mathcal{B}_i $. 
The following theorem gives the necessary and sufficient condition for the existence of a $(3,\lambda)$-LGDD$(g^u)$.
\begin{thm} {\rm \cite{lei,chini}} \label{thm:largeset}
There exists a $(3,\lambda)$-LGDD($g^u$) if and only if $\lambda g(u - 1) \equiv 0 \pmod 2$, $\lambda g^2u(u - 1) \equiv 0 \pmod 3$, $ gu(u-2) \equiv 0\pmod \lambda $, $u \geq  3$ and $(\lambda,g, u) \neq (1,1, 7)$.
\end{thm}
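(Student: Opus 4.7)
The plan is to prove the two directions separately: necessity by parameter counting, sufficiency by the classical recursive architecture for large sets of designs.

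\textbf{Necessity.} For each individual $(3,\lambda)$-GDD$(g^u)$ in the large set, the replication number at every point is $\lambda g(u-1)/2$, which forces $\lambda g(u-1)\equiv 0\pmod 2$; counting transversal pairs covered by blocks, the block count equals $\lambda g^2 u(u-1)/6$, and combined with the parity condition this gives $\lambda g^2 u(u-1)\equiv 0\pmod 3$. The total number of transversal triples on $X$ is $\binom{u}{3}g^3$, each appearing in exactly one member GDD, so the number of members of the large set equals $g(u-2)/\lambda$, a positive integer; this implies in particular $\lambda\mid gu(u-2)$. The bound $u\geq 3$ is needed for transversal triples to exist at all, and $(\lambda,g,u)\neq(1,1,7)$ reflects the classical fact that the $35$ triples on $7$ points cannot be partitioned into $5$ disjoint Fano planes.

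\textbf{Sufficiency.} This is the substantive direction. I would follow a three-layer recursive construction. First, build a library of \emph{base LGDDs} covering each admissible residue class of $u$ modulo $\operatorname{lcm}(6,2\lambda)$ together with each small $g$, by direct algebraic constructions---difference families, starters and adders over $\mathbb{Z}_n$, Kirkman-type resolvable triple systems, and $1$-factorizations of $K_{ug}$. Second, prove \emph{composition lemmas} of Wilson type: an inflation by a transversal design $\mathrm{TD}(3,h)$ turns an LGDD$(g^u)$ into an LGDD$((gh)^u)$; a ``filling in groups'' step combines an LGDD with an outer skeleton and LGDDs on each fill-in block to enlarge $u$ while preserving the large-set property; and a ``scalar'' step lifts $(3,1)$-LGDDs to $(3,\lambda)$-LGDDs via $\lambda$-fold copies, reconciled using simple triple systems from \pref{thm:simpleTS}. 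Third, run an induction on $u$ (and separately on $\lambda$ and $g$) to cover every admissible $(g,u)$ by composing base cases, isolating $(1,1,7)$ as a genuine exception that fails only at the bottom of the recursion.

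\textbf{Main obstacle.} The hardest step will be producing the base LGDDs. Simplicity (no repeated blocks within a GDD) combined with the large-set partition property is far more restrictive than either alone, and generic Wilson-type arguments do not preserve both simultaneously; the small-order cases typically demand bespoke cyclic starter/adder constructions over carefully chosen abelian groups, sometimes supplemented by exhaustive search for the very smallest parameters. A secondary difficulty is the bookkeeping required to ensure the composition lemmas mesh across all residue classes modulo $\operatorname{lcm}(6,2\lambda)$ without leaving any admissible $(g,u)$ uncovered, and to quarantine the single exception $(1,1,7)$ so that its non-existence does not propagate through the inductions to eliminate genuinely existing large sets of larger order.
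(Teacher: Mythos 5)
This statement is not proved in the paper at all: it is imported verbatim from the cited references (Lei; Niu--Cao--Javadi), so there is no internal proof to compare yours against. Judged on its own terms, your necessity argument is correct and essentially complete: the replication count $\lambda g(u-1)/2$, the block count $\lambda g^2u(u-1)/6$, and the member count $\binom{u}{3}g^3\big/\bigl(\lambda g^2u(u-1)/6\bigr)=g(u-2)/\lambda$ give exactly the stated congruences (indeed the last one gives the stronger condition $\lambda\mid g(u-2)$, which the paper itself uses implicitly when it asserts that a large set consists of $\binom{u-2}{k-2}g^{k-2}/\lambda$ disjoint GDDs), and the exceptional case $(1,1,7)$ is Cayley's classical non-existence of a large set of Steiner triple systems of order $7$.

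The gap is in sufficiency, which is the entire content of the theorem. What you have written there is a strategy memo, not a proof: no base LGDD is actually exhibited, the composition lemmas (inflation by transversal designs, filling in groups, lifting $\lambda=1$ to general $\lambda$) are named but not stated precisely or verified to preserve simultaneously the simplicity of each member and the exact partition of the transversal triples, and you explicitly defer the hardest part (the bespoke starter/adder base constructions and the bookkeeping across residue classes) to future work. These are not routine details; carrying them out is precisely what occupies the two cited papers, and in particular the preservation of the large-set property under inflation requires large sets of transversal designs (equivalently, suitable families of idempotent Latin squares), not merely a single $\mathrm{TD}(3,h)$. Since this theorem is used in the paper only as a black box (via Corollary~\ref{cor:simpleGDD}), the honest course is to cite it as the paper does rather than to claim a proof whose essential steps remain unexecuted.
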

It is easy to check that a $(k,\lambda)$-LGDD$(g^u)$ consists of  $ \binom{u-2}{k-2} g^{k-2}/\lambda $ pairwise disjoint simple $(k, \lambda)$-GDD($ g^u $)s. Therefore, if  a $(k,\lambda)$-LGDD$(g^u)$ exists, then for every $ t $, $ 1\leq t\leq \binom{u-2}{k-2} g^{k-2}/\lambda  $  one may construct a simple $(k,\lambda t) $-GDD$ (g^u) $ by juxtaposition of $ t $ pairwise disjoint $(k, \lambda)$-GDD($ g^u $)s. Hence, the following is a consequence of \pref{thm:largeset} (the case $ (g,u)=(1,7) $ follows from \pref{thm:simpleTS}.  For more detail, see \cite{chini}). 
\begin{cor} \label{cor:simpleGDD}
	A simple $(3,\lambda) $-GDD$ (g^u) $ exists if and only if  $\lambda g(u - 1) \equiv 0 \pmod 2$, $\lambda g^2u(u - 1) \equiv 0 \pmod 3$, $u \geq  3$ and $1\leq \lambda \leq g(u-2) $.
\end{cor}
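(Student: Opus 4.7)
The plan is to verify necessity by standard double counting and to derive sufficiency from Theorem~\ref{thm:largeset} via the juxtaposition principle stated just before the corollary, handling only the classical Fano exception $(g,u)=(1,7)$ separately by means of Theorem~\ref{thm:simpleTS}.

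For necessity, I would count in two ways. Fixing a point $x$ in its group and counting pairs through $x$ with a point outside shows that $x$ lies in $r=\lambda g(u-1)/2$ blocks, forcing $\lambda g(u-1)\equiv 0\pmod 2$. Counting point-block incidences yields the total number of blocks $b=\lambda g^2 u(u-1)/6$, which forces $\lambda g^2 u(u-1)\equiv 0\pmod 3$ (the required divisibility by $2$ is automatic since $u(u-1)$ is even). The bound $\lambda\le g(u-2)$ is because any pair $\{x,y\}$ from different groups is in a block determined by a third point outside the two groups of $x$ and $y$, and simplicity requires the $\lambda$ such blocks to use distinct third points, of which there are only $g(u-2)$.

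For sufficiency, let $\lambda_0$ be the least positive integer satisfying $\lambda_0 g(u-1)\equiv 0\pmod 2$ and $\lambda_0 g^2 u(u-1)\equiv 0\pmod 3$. A short case analysis on the parity of $g(u-1)$ and on whether $3\mid g^2 u(u-1)$ gives $\lambda_0\in\{1,2,3,6\}$. In each of the four resulting cases, the hypothesis determining $\lambda_0$ forces residue restrictions on $u$ (for instance, $\lambda_0\in\{2,6\}$ forces $u$ even, and $\lambda_0\in\{3,6\}$ forces $u\equiv 2\pmod 3$), and these immediately yield $\lambda_0\mid gu(u-2)$; hence the triple $(\lambda_0,g,u)$ satisfies all the divisibility hypotheses of Theorem~\ref{thm:largeset}. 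By the minimality in the definition of $\lambda_0$, the first two GDD conditions of the corollary moreover imply $\lambda_0\mid\lambda$.

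When $(g,u)\ne (1,7)$, Theorem~\ref{thm:largeset} furnishes a $(3,\lambda_0)$-LGDD$(g^u)$ consisting of $(u-2)g/\lambda_0$ pairwise disjoint simple $(3,\lambda_0)$-GDD$(g^u)$s. Setting $t=\lambda/\lambda_0$, the assumption $\lambda\le g(u-2)$ gives $t\le (u-2)g/\lambda_0$, so juxtaposing $t$ of the disjoint GDDs produces a simple $(3,\lambda)$-GDD$(g^u)$, as required. For the remaining parameters $(g,u)=(1,7)$, the GDD conditions collapse to $1\le\lambda\le 5$, and Theorem~\ref{thm:simpleTS} directly delivers the required simple $\lambda$-fold triple system on $7$ points. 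The only real nuisance in the argument is the $\lambda_0$ case analysis together with remembering to dispatch the excluded triple $(1,1,7)$ separately; everything else is essentially bookkeeping on top of Theorem~\ref{thm:largeset}.
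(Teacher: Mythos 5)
Your proposal is correct and follows essentially the same route as the paper, which derives the corollary from Theorem~\ref{thm:largeset} by juxtaposing pairwise disjoint GDDs from a large set and dispatches $(g,u)=(1,7)$ via Theorem~\ref{thm:simpleTS}, deferring the details (your $\lambda_0$ bookkeeping and the verification of $\lambda_0\mid gu(u-2)$) to the cited reference. Your necessity argument and the check that the minimal $\lambda_0\in\{1,2,3,6\}$ divides $\lambda$ and satisfies the large-set hypotheses are exactly the omitted details, correctly carried out.
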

%
%
%
%
\subsection{System of linear Diophantine equations}
Finally, we will need the following simple result which can be viewed as a generalization of Chinese Remainder Theorem.
\begin{lem}\label{lem:number}
Let $ p_1,\ldots, p_s, q_1,\ldots, q_t $ be some prime powers with distinct bases such that $ q_j\geq 4 $, for all $ 1\leq j\leq t $. Let $ M= p_1\cdots p_s $ and $ N=q_1\cdots q_t $. Also, suppose that $ a_1,\ldots, a_s$, $b_1,\ldots, b_t$, $c_1,\ldots, c_t$, $d_1,\ldots,d_t $ are some integers. Then, there exists a positive integer $ x $, where $ x\leq N^{1/\Omega(\log\log N)} M $ such that
\begin{align}
x & \equiv a_i \pmod {p_i}, \ \forall\ 1\leq i\leq s,\label{eq:eq} \\
x & \not \equiv b_i,c_i,d_i \pmod {q_i}, \ \forall\ 1\leq i\leq t.\label{eq:neq}
\end{align}
\end{lem}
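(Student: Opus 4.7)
\emph{Proof proposal.} The plan is to reduce via CRT to a one-variable Jacobsthal-type problem, and handle that by an inclusion-exclusion sieve combined with classical density estimates for the primes dividing $N$. Since the $p_i$ are pairwise coprime prime powers, CRT collapses \eqref{eq:eq} into a single congruence $x\equiv a\pmod M$ for some $a\in[0,M)$. Writing $x=a+kM$ and using $\gcd(M,q_j)=1$, each condition in \eqref{eq:neq} becomes the requirement that $k$ avoid (at most) three explicit residues modulo $q_j$; denote the forbidden set $B_j\subseteq\mathbb Z/q_j\mathbb Z$, $|B_j|\le 3$. It therefore suffices to find a good $k\in[0,L)$ with $L\le N^{1/\Omega(\log\log N)}$, since then $x=a+kM\le ML$ has the required size.

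To count good $k$'s, I would apply the standard sieve identity
\[
S(L)=\sum_{I\subseteq[t]}(-1)^{|I|}\sum_{(r_j)\in\prod_{j\in I}B_j}\#\{k\in[0,L):k\equiv r_j\pmod{q_j},\ \forall j\in I\}.
\]
Since the $q_j$ are pairwise coprime (distinct bases), the innermost count equals $L/Q_I+O(1)$ with $Q_I=\prod_{j\in I}q_j$, and summing main and error terms gives
\[
S(L)\ \ge\ L\prod_{j=1}^t\!\Bigl(1-\tfrac{|B_j|}{q_j}\Bigr)-\sum_I\prod_{j\in I}|B_j|\ \ge\ L\delta-4^t,\quad \delta\isdef\prod_{j=1}^t\Bigl(1-\tfrac{3}{q_j}\Bigr).
\]
So $S(L)\ge 1$ holds as soon as $L\ge(1+4^t)/\delta$, and the task reduces to showing $(1+4^t)/\delta\le N^{O(1/\log\log N)}$. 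Each $q_j\ge 4$ corresponds to a distinct prime $p_j$, so the prime number theorem yields $N\ge\prod_{p\le p_t}p=e^{p_t(1+o(1))}$, whence $p_t=O(\log N)$ and $t\le \pi(p_t)=O(\log N/\log\log N)$; hence $4^t\le N^{O(1/\log\log N)}$. By Mertens' theorem,
\[
\log(1/\delta)=\sum_j-\log\!\Bigl(1-\tfrac{3}{q_j}\Bigr)\le\sum_j\tfrac{3}{q_j-3}\ \ll\ 1+\sum_{p\le p_t}\tfrac{1}{p}=O(\log\log\log N),
\]
so $1/\delta=(\log\log N)^{O(1)}$. Combining these estimates, $L=N^{O(1/\log\log N)}$ suffices and any such good $k$ produces the required $x$.

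The main obstacle is the density estimate: a naive union bound controls only $3L\sum 1/q_j+3t$, which may well exceed $L$ because $\sum 1/q_j$ need not be less than $1/3$; the sieve converts this sum into the product $\delta$ at the additive cost $4^t$, and it is precisely the distinct-base hypothesis together with Mertens' bound that keeps both $4^t$ and $1/\delta$ compatible with the target exponent $1/\log\log N$.
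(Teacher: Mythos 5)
Your proof is correct, but it takes a genuinely different route from the paper. You run a single inclusion--exclusion sieve over all the moduli $q_j$ at once, lower-bounding the count of admissible $k\in[0,L)$ by $L\prod_j(1-3/q_j)-4^t$, and then you need two analytic inputs: the primorial bound $t=O(\log N/\log\log N)$ to control $4^t$, and Mertens' estimate to show the sieve density $\delta=\prod_j(1-3/q_j)$ is only $(\log\log N)^{-O(1)}$. The paper instead splits the $q_j$ by size at the threshold $3t+1$: for the small ones it simply picks an admissible residue $e_i$ (possible since $q_i\ge 4$ leaves a free residue class) and folds them into the CRT modulus $N'=M q_1\cdots q_k$, and for the large ones it observes that each forbids at most three values of the multiplier $h$ in $\{1,\dots,3t+1\}$, so pigeonhole leaves a good $h$; the only estimate then needed is that the product of the small $q_i$ is $2^{O(t)}$. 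Your sieve is more uniform (no case split, and it actually shows a positive proportion of $k$ work, not just one), at the price of the Mertens computation to tame $\delta$; the paper's argument is more elementary, trading the density estimate for the explicit threshold trick. Both hinge on the same counting fact $t=O(\log N/\log\log N)$ and arrive at the same exponent. Two cosmetic points: your reuse of $p_j$ for the base of $q_j$ collides with the lemma's $p_1,\dots,p_s$, and you should take $k\ge 1$ (or note $x=a+kM>0$ separately) to guarantee $x$ is a \emph{positive} integer; neither affects correctness.
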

\begin{proof}
Define $ k=\min \{i : q_{i+1}\geq 3t+1 \} $ and let $ N'=(p_1\cdots p_s)(q_1\cdots q_k) $. Choose integers $ e_i $, $ 1\leq i\leq k $, such that $ e_i\not \equiv b_i,c_i,d_i \pmod {q_{i}} $ which exist since $ q_i\geq 4 $. By Chinese Remainder Theorem, there exists integer $ r $, $ 1\leq r\leq N' $, such that 
\begin{align*}
r & \equiv a_i \pmod {p_i}, \ \forall\ 1\leq i\leq s,  \\
r & \equiv e_i \pmod {q_i}, \ \forall\ 1\leq i\leq k. 
\end{align*}
Now, for each $ i $, $ k+1\leq i\leq t $, let $ \hat{b}_i,\hat{c}_i,\hat{d}_i $ be three integers such that $1\leq  \hat{b}_i,\hat{c}_i,\hat{d}_i\leq q_i $ and $ \hat{b}_iN' \equiv b_i-r \pmod{q_i}  $, $ \hat{c}_iN' \equiv c_i-r \pmod{q_i}  $ and $ \hat{d}_iN' \equiv d_i-r \pmod{q_i}  $. Such integers exist since $ N' $ and $ q_i $ are coprime. Now, choose an integer $ h $ in the set $ \{1,2,\ldots, 3t+1 \}\setminus \{\hat{b}_1,\ldots,\hat{b}_t,\hat{c}_1,\ldots,\hat{c}_t,\hat{d}_1,\ldots,\hat{d}_t \} $. Since $ q_i\geq 3t+1 $, for $k+1\leq i\leq t $, it is clear that 
\[
h \not \equiv \hat{b}_i,\hat{c}_i,\hat{d}_i \pmod {q_i}, \ \forall\ k+1\leq i\leq t.
\]
Therefore, 
\[
N'h+r \not \equiv b_i,c_i,d_i \pmod {q_i}, \ \forall\ k+1\leq i\leq t.
\]
Hence, $ x=N'h+r $ satisfies \eqref{eq:eq} and \eqref{eq:neq}. Also,
we have $ x\leq N'(3t+2)= M(3t+2)q_1\cdots q_k $. By well-known estimates of common number theoretical functions \cite{numbert}, we have $ q_1\cdots q_k \leq 2^{(1+o(1))3t} $ and also, $ N\geq 2^{\Omega(t\log t)} $. Thus, there is a constant $c$ such that $x\leq M2^{ct}$. Now, if  $ t \leq \sqrt{\log N } $, then $ x\leq M2^{ct}\leq M 2 ^ {c\sqrt{\log N}}\leq MN^{1/\Omega(\log\log N)}  $. Finally, if $ t \geq \sqrt{\log N }$, then $ x\leq M 2^{ct} \leq M N^{1/\Omega(\log t)} \leq M N^{1/\Omega(\log \log N)} $.  
\end{proof}

\section{The case $(n-2) \not\equiv 0 \pmod{k-2}$\label{sec:1}} 
In this section, we prove that if $ n $ is large enough and $ (n-2)\not\equiv 0 \pmod{k-2} $, then equality holds in \eqref{eq:johnson}. 

\begin{thm} \label{thm:r!0}
Let $ k\geq 3 $ be a fixed integer. There exists a constant $ n_0=n_0(k) $ such that  for every $ n\geq n_0 $, if $ (n-2)\not\equiv 0 \pmod{k-2} $, then $ D(n,k,3)=J(n,k,3) $.
\end{thm}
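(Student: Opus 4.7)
The upper bound $D(n,k,3)\le J(n,k,3)$ is \eqref{eq:johnson}. For the matching lower bound I apply \pref{lem:multigraph3}: with $\xi:=J(n,k,3)$, it suffices to exhibit a multigraph $G$ on $n$ vertices satisfying conditions (i)--(v) of that lemma, since then a $3$-$(n,k,1)$ packing of size $\xi$ exists and $D(n,k,3)\ge\xi$.

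Following Johnson's iteration, write $n-2=A(k-2)+r$ with $1\le r\le k-3$ (available by hypothesis), $(n-1)A=B(k-1)+s$ with $0\le s\le k-2$, and $nB=\xi k+t$ with $0\le t\le k-1$. A direct expansion of $n(n-1)(n-2)-\xi k(k-1)(k-2)$ yields
\[
  6\bigl(\tbinom{n}{3}-\xi\tbinom{k}{3}\bigr)=rn(n-1)+(k-2)\bigl(sn+t(k-1)\bigr),
\]
which is the target value of $2|E(G)|$. I therefore take $G=rK_n+(k-2)H^{\ast}$, where $H^{\ast}$ is a simple graph on $V(G)$ having $t$ vertices of degree $s+(k-1)$ and the remaining $n-t$ vertices of degree $s$. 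The degree sum $sn+t(k-1)=An(n-1)-\xi k(k-1)$ is even, and the maximum degree $s+k-1\le 2k-3$ is bounded in $k$; so for $n\ge n_0(k)$ such an $H^{\ast}$ exists by the Erd\H{o}s--Gallai criterion.

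Conditions (iii) and (v) are immediate: every $m_G(xy)\in\{r,r+k-2\}$ is $\equiv n-2\pmod{k-2}$ and bounded by $\sigma=2k-5$. For (ii), expanding $(n-1)(n-2)$ via $n-2=A(k-2)+r$ and $(n-1)A=B(k-1)+s$ gives $(n-1)(n-2)\equiv r(n-1)+s(k-2)\pmod{(k-1)(k-2)}$, which matches $\deg_G(x)=r(n-1)+(k-2)\deg_{H^{\ast}}(x)$ for $\deg_{H^{\ast}}(x)\in\{s,s+k-1\}$ (the additional $(k-1)(k-2)$ in the latter case is absorbed by the modulus). Condition (i) is a direct edge count. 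Finally for (iv), $G$ is $(3,2)$-divisible, since $3$ divides $|E(G)|=3\bigl(\binom{n}{3}-\xi\binom{k}{3}\bigr)$ and every $\deg_G(x)$ has the same parity as the even number $(n-1)(n-2)$; moreover the dominant multiplicity $\lambda=r$ is exceeded only on the $\le 2k-3$ edges of $H^{\ast}$ incident to any vertex. Thus \pref{lem:multigraph2}, applied with $\lambda'=r+k-2$, produces the required distinct $K_3$-decomposition once $n\ge n_0(k)$.

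The main technical point is to confirm that the prescribed degree sequence for $H^{\ast}$ is graphical. With bounded maximum degree and even sum, the Erd\H{o}s--Gallai test succeeds for all large $n$ in generic residue classes, but cases with very small $s$ (notably $s=0$ and $t\neq 0$, where $t<k$ vertices would be forced to have degree $k-1$ against an otherwise empty graph) demand flexibility: one redistributes the excess degree by allowing a few pairs of $H$ to carry a higher multiple of $k-2$, or equivalently modifies $H^{\ast}$ so that the excess is spread across a bounded-size set of ``absorbing'' vertices. Such a modification preserves every modular and edge-count identity used above while making the degree sequence realizable, completing the reduction to \pref{lem:multigraph3}.
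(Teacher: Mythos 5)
Your construction is essentially the paper's own. With your parameters $r,s,t$ one checks that the paper's $\alpha$ equals $s(k-2)$ and its $\beta$ equals $t(k-1)(k-2)$, so your $G=rK_n+(k-2)H^{\ast}$ is exactly the paper's $rK_n+(k-2)G'$, except that you spread the global excess $t(k-1)$ over $t$ vertices of $H^{\ast}$ while the paper piles $s+t(k-1)$ onto a single vertex; both versions then run through \pref{lem:multigraph3}, \pref{lem:multigraph2} and \pref{lem:erdosgallai} in the same way, and all the identities you verify (edge count, the congruences in (ii) and (iii), $(3,2)$-divisibility) are correct.

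The genuine issue is precisely the ``main technical point'' you flag, and your proposed repair does not close it in full. Split the degenerate case $s=0$, $t\neq 0$ according to $t$. For $t\geq 2$ your first suggestion works: put the whole excess on a set $S$ of $t$ vertices by adding $(k-2)$ times a loopless multigraph on $S$ that is $(k-1)$-regular (this exists since $t(k-1)$ is even and $t\geq 2$); all congruences survive and \pref{lem:multigraph2} still applies with a larger $\lambda'$. For $t=1$, however, no redistribution can succeed, because conditions (i)--(iii) of \pref{lem:multigraph3} are jointly unsatisfiable: since $r\geq 1$, condition (iii) forces every multiplicity to be at least $r$ and hence every degree to be at least $r(n-1)$; condition (ii) with $s=0$ forces each excess $\deg_G(x)-r(n-1)$ to be a nonnegative multiple of $(k-1)(k-2)$; condition (i) makes these excesses sum to $(k-1)(k-2)$, so exactly one vertex has positive excess; but that vertex must then have an incident multiplicity at least $r+(k-2)$, forcing the other endpoint to have positive excess as well --- a contradiction. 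By the forward direction of \pref{lem:multigraph3} this even gives $D(n,k,3)\leq J(n,k,3)-1$ in such residue classes (e.g.\ $k=5$ and $n\equiv 9\pmod{60}$, where $r=1$, $s=0$, $t=1$). You should know that the paper's own proof has the identical hole: it invokes \pref{lem:erdosgallai} for the sequence with one vertex of degree $\gamma_0=(\alpha+\beta)/(k-2)>0$ and all remaining vertices of degree $\gamma=0$, which is not graphical (and \pref{lem:erdosgallai} as stated is false for such sequences). So your instinct that this case ``demands flexibility'' is right, but for $s=0$, $t=1$ the flexibility cannot come from the leave multigraph at all.
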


In order to prove the above theorem, we need the following lemma about the existence of simple graphs which is a straightforward corollary of Erd\H{o}s-Gallai Theorem \cite{erdosgallai}.

\begin{lem}\label{lem:erdosgallai}
	For every positive integer $ c $, there is a constant $ n_0=n_0(c) $, such that for every $ n\geq n_0 $ and every integers $d_1, \ldots, d_n  $, if $ 0\leq d_i\leq c $ and $ \sum d_i $ is even, then there exists a simple graph on $ n $ vertices with the degree sequence $ (d_1,\ldots, d_n) $.
\end{lem}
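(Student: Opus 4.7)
The plan is to invoke the Erd\H{o}s--Gallai theorem, which states that a non-increasing sequence $d_1\geq d_2\geq \cdots \geq d_n$ is the degree sequence of a simple graph on $n$ vertices if and only if $\sum_i d_i$ is even and, for every $1\leq k\leq n$,
\[
\sum_{i=1}^{k} d_i \;\leq\; k(k-1) \;+\; \sum_{i=k+1}^{n} \min(d_i,k).
\]
After reordering so that $d_1\geq d_2\geq \cdots\geq d_n$, the parity of $\sum_i d_i$ is handed to us by hypothesis, so the task reduces to checking the displayed Erd\H{o}s--Gallai inequalities.

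I would split the range of $k$ at the threshold $c$. For $k\geq c+1$, the uniform bound $d_i\leq c$ gives $\sum_{i=1}^{k} d_i \leq kc \leq k(k-1)$, so the inequality is immediate and uses nothing about $n$. For $k\leq c$, the left hand side is at most $kc\leq c^{2}$, and it suffices to force the right hand side above $c^{2}$. Since $\min(d_i,k)\geq 1$ whenever $d_i\geq 1$, the inequality holds as soon as at least $c^{2}$ of the indices $i\geq k+1$ satisfy $d_i\geq 1$, which is automatic once the ``support'' of the sequence is large.

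The main obstacle is the sparse regime, where fewer than $c^{2}+c$ of the $d_i$ are positive: there the Erd\H{o}s--Gallai inequality for small $k$ could a priori fail. I would dispose of this regime by noting that the positive part of the sequence then lives on a constant-sized vertex set (depending only on $c$), while choosing $n_0=n_0(c)$ large leaves abundantly many isolated vertices as slack, so that realizability reduces to a bounded case analysis handled by an explicit pairing/gadget construction on $O(c^{2})$ vertices. In the way this lemma is actually used in the sequel, the $d_i$ are all congruent to a common residue modulo some divisor depending on $k$, and $n\to\infty$ forces $\Omega(n)$ positive entries, so the first subcase applies automatically and the finite check is never invoked.
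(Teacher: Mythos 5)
Your reduction to the Erd\H{o}s--Gallai inequalities and your treatment of the dense regime are exactly the intended argument (the paper offers no proof of this lemma, simply declaring it a straightforward corollary of Erd\H{o}s--Gallai): for $k\geq c+1$ one has $\sum_{i\leq k}d_i\leq kc\leq k(k-1)$, and for $k\leq c$ the right-hand side already exceeds $c^2\geq\sum_{i\leq k}d_i$ once at least $c^2+c$ of the entries are positive. That part is complete and correct, and it is all that is needed in the paper's actual application when the degrees are bounded away from zero.

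The gap is in your sparse regime, and it cannot be closed the way you propose. Isolated vertices contribute $\min(0,k)=0$ to the Erd\H{o}s--Gallai sums, so padding with zeros gives no slack whatsoever: a sequence with support $S$ together with $n-|S|$ zeros is graphical if and only if its restriction to $S$ is graphical on $|S|$ vertices. And that restriction genuinely need not be graphical: $(2,0,0,\dots,0)$ has even sum and entries in $\{0,\dots,c\}$, yet no simple graph realizes it, since a vertex of degree $2$ forces two further vertices of positive degree; likewise $(2,2,0,\dots,0)$ and $(3,1,0,\dots,0)$. So your ``bounded case analysis on $O(c^2)$ vertices'' would uncover counterexamples rather than dispose of them --- the lemma as stated is false without an additional hypothesis guaranteeing enough positive entries (for instance $d_i\geq 1$ for all $i$, under which your $k\leq c$ estimate gives a right-hand side of at least $n-c$ and the whole proof closes with $n_0=c^2+c$). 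Your closing observation that the intended application only feeds in near-constant sequences is the right way to rescue the statement, but it is a repair of the lemma rather than a proof of it; note moreover that the paper applies the lemma to the sequence $(\gamma_0,\gamma,\dots,\gamma)$, which lands in exactly this bad regime when $\gamma=0<\gamma_0$, so whatever positivity hypothesis you add must also be verified at that point of use.
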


\begin{proof}[\rm \textbf{Proof of \pref{thm:r!0}}]
The upper bound is obtained from \eqref{eq:johnson}. Now, we prove the lower bound. Let $ r\equiv (n-2) \pmod{k-2} $, where $ 0<r<k-2 $. Also, let $ \alpha\equiv (n-1)(n-r-2) \pmod {(k-1)(k-2)} $ and $ \beta \equiv (n(n-1) (n-2-r)-n\alpha) \pmod {k(k-1)(k-2)} $, where  $ 0\leq \alpha< (k-1)(k-2) $ and $ 0\leq \beta< k(k-1)(k-2) $. By these notations, we have 
\[J(n,k,3)= \frac{n(n-1)(n-2-r) - \alpha n- \beta}{k(k-1)(k-2)}. \]

By \pref{lem:multigraph3}, it is enough to prove that there exists a multigraph $ G $ on $ n $ vertices satisfying Conditions (i)-(v) such that $ |E(G)|= (rn(n-1)+\alpha n+\beta)/2$. We construct the multigraph $ G $ as follows.

%
%
%
%
%
%
For this, first define
\begin{equation}  \label{eq:gammax}
\gamma_0= \frac{\alpha+\beta}{k-2} \text{ and } \gamma=\frac{\alpha}{k-2}.
\end{equation}
%
%
%

By definitions of $\alpha$ and $\beta$, we have $\gamma_0$ and $\gamma$ are integers. Also, note that by \pref{lem:erdosgallai}, if $ n $ is sufficiently large (with respect to $ k $), there exists a simple graph $ G' $ on $ n $ vertices with one vertex $ x_0 $ of degree $ \gamma_0 $ and other vertices of degree $ \gamma $ (because the degrees sum is equal to $ (n-1)\gamma+\gamma_0=(n\alpha +\beta)/(k-2)  $ which is even by the definition of $ \beta $ and also $ \gamma\leq k $ and $ \gamma_0\leq k^2 $).

Let $ G $ be the edge disjoint union of $ (k-2)G' $ and $ rK_n $ on the same set of vertices. (For a graph $ H $ and integer $ \lambda $, the graph  $\lambda H  $ is obtained from $ H $ by replacing each edge by $ \lambda $ parallel edges). It is clear that $ 2|E(G)|=rn(n-1)+\alpha n+\beta $. Also, the multiplicity of each edge is equal to either $ r $ or $ r+k-2 $. So, Conditions (iii) and (v) hold.  Moreover, for every vertex $ x\neq x_0 $, we have $ \deg_G(x)=r(n-1)+\alpha $ and also $ \deg_G(x_0)=r(n-1)+\alpha+\beta $. By definitions of $ \alpha  $ and $ \beta $, we have $ r(n-1)+\alpha \equiv (n-1)(n-2) \pmod{(k-1)(k-2)} $ and $ \beta\equiv 0 \pmod{(k-1)(k-2)} $. Thus, Condition (ii) holds. Finally, it remains to prove that Condition (iv) holds, i.e. $ G $ admits a distinct $ K_3 $-decomposition for sufficiently large $ n $.

%
%

For this, we apply \pref{lem:multigraph2} by setting $ q=3 $, $ \lambda'=r+k-2 $ and $\lambda=r  $. For every vertex $ x $, the number of vertices $ y $ such that $ m_{G}(xy)\neq r $ is at most $ \gamma_0\leq k^2 $. Also, by Condition (ii), the degree of every vertex is even and $ |E(G)| $ is divisible by 3 for $2|E(G)|= rn(n-1)+\alpha n+\beta \equiv n(n-1)(n-2)\pmod {k(k-1)(k-2)} $. Thus, $G $ is $ (3,2) $-divisible and, by \pref{lem:multigraph2}, admits a distinct $ K_3$-decomposition when $ n $ is sufficiently large. This completes the proof. 
%
%
%
\end{proof}

\section{The case $ (n-2)\equiv 0 \pmod {k-2} $ and $ (n-1)(n-2)\equiv 0\pmod {(k-1)(k-2)} $\label{sec:2}}
Now, we go through the case that $ (n-2)\equiv 0 \pmod{k-2} $ and $ (n-1)(n-2)\equiv 0\pmod {(k-1)(k-2)} $. If, in addition, $ n(n-1)(n-2)\equiv 0 \pmod {k(k-1)(k-2)} $, then the necessary conditions for the existence of a design hold and by the result of Keevash \cite{keevash}, we have $ D(n,k,3)=J(n,k,3) $, for sufficiently large $ n $.
In the following theorem, we look into the other case when  $n(n-1)(n-2)\not\equiv 0 \pmod {k(k-1)(k-2)}$.
\begin{thm} \label{thm:upper_r0a0b!0}
	Let $ n,k $ be two positive integers such that $ n>k\geq 4 $, $ (n-2)\equiv 0 \pmod{k-2} $, $ (n-1)(n-2)\equiv 0\pmod {(k-1)(k-2)} $ and  $ n(n-1)(n-2)\not\equiv 0 \pmod {k(k-1)(k-2)} $. Then, $ D(n,k,3)\leq J(n,k,3)-3 $.
\end{thm}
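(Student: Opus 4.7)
The plan is by contradiction via Lemma~\ref{lem:multigraph3}. Suppose $D(n,k,3)\geq J(n,k,3)-2$ and set $\delta=J(n,k,3)-D(n,k,3)\in\{0,1,2\}$. The hypotheses $(n-2)\equiv 0\pmod{k-2}$ and $(n-1)(n-2)\equiv 0\pmod{(k-1)(k-2)}$ imply that the remainder
\[
L:=n(n-1)(n-2)-k(k-1)(k-2)\,J(n,k,3)
\]
has the form $m(k-1)(k-2)$ for an integer $m$ with $1\leq m\leq k-1$, positivity being precisely the non-divisibility hypothesis $n(n-1)(n-2)\not\equiv 0\pmod{k(k-1)(k-2)}$. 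Lemma~\ref{lem:multigraph3} then produces a leave multigraph $G$ on $n$ vertices satisfying conditions (i)-(iv) with $|E(G)|=(m+\delta k)(k-1)(k-2)/2$.

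Write $V_0=\{v\in V(G):\deg_G(v)>0\}$ and, for $v\in V_0$, set $d_v=\deg_G(v)/((k-1)(k-2))$; by (ii) each $d_v$ is a positive integer and $\sum_{v\in V_0}d_v=m+\delta k$, so $|V_0|\leq m+\delta k\leq 3k-1$. Let $\bar{\mathcal{H}}$ be the simple $3$-graph on $V_0$ whose triples are the $K_3$'s of the distinct decomposition from (iv); then $d_{\bar{\mathcal{H}}}(v)=d_v\binom{k-1}{2}$ and $d_{\bar{\mathcal{H}}}(\{u,v\})=m_G(uv)$ is a multiple of $k-2$ by (iii). The link $L_v$ of $v$ in $\bar{\mathcal{H}}$ is a simple graph on $V_0\setminus\{v\}$ with exactly $d_v\binom{k-1}{2}$ edges, all of whose vertex-degrees are multiples of $k-2$.

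For $\delta=0$ the conclusion is immediate: $|V_0|\leq m\leq k-1$, yet the link of any $v\in V_0$ has $\geq\binom{k-1}{2}>\binom{k-2}{2}\geq\binom{|V_0|-1}{2}$ edges, impossible. For $\delta=1$ I plan to show that the support graph $F$ of $G$ is a disjoint union of $K_k$'s. When $m\leq k-3$, the size bound $m_G(uv)\leq|V_0|-2\leq 2k-5<2(k-2)$ forces every multiplicity into $\{0,k-2\}$ and every nonzero link-degree to equal $k-2$; hence the induced subgraph of $L_v$ on $N_F(v)$ is $(k-2)$-regular on $k-1$ vertices, i.e.\ $K_{k-1}$, so $\{v\}\cup N_F(v)$ is a $K_k$ in $F$. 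A short exchange argument shows that $F$-adjacent vertices have identical closed $F$-neighborhoods, so each component of $F$ is a $K_k$; this forces $k\mid|V_0|=m+k$, hence $m\equiv 0\pmod k$, contradicting $1\leq m\leq k-1$. The border cases $m\in\{k-2,k-1\}$, where multiplicities $2(k-2)$ or deficits $d_v\geq 2$ become admissible, are handled by refining the count of triangles of $F$ available to host $\bar{\mathcal{H}}$ against the required $(m+k)(k-1)(k-2)/6$.

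The case $\delta=2$ follows the same blueprint with a richer inventory of admissible structures ($|V_0|\leq m+2k$, multiplicities up to $3(k-2)$, and non-uniform deficit sequences). For each configuration, either the $K_k$-component argument applies and yields $k\mid|V_0|=m+2k$ (hence $k\mid m$, again contradicting $m\in\{1,\ldots,k-1\}$), or a direct comparison between the number of $F$-triangles and the required $(m+2k)(k-1)(k-2)/6$ triples rules the configuration out. The main obstacle is precisely this $\delta=2$ case-by-case analysis with non-uniform $(d_v)$ and higher edge multiplicities; uniformly managing the sub-cases via careful bookkeeping of deficits and support-structure will be the technical heart of the proof.
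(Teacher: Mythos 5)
Your setup is the same as the paper's: apply \pref{lem:multigraph3} to a hypothetical packing of size $J(n,k,3)-2$, obtain a leave multigraph $G$ whose edge count is a small multiple of $(k-1)(k-2)$, and exploit the fact that a distinct $K_3$-decomposition forces $m_G(uv)$ common neighbours for every pair $u,v$ (your link-graph observation is exactly the paper's Fact~1). Your $\delta=0$ argument is correct and clean, but note it buys nothing: a packing of size $\geq J-2$ contains one of size exactly $J-2$, so only the largest leave budget ($\sum_v d_v=m+2k$, your $\delta=2$) ever needs to be ruled out, and that is precisely the case you do not prove.

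That is the genuine gap. For $\delta=1$ you only handle $m\leq k-3$ under the additional unstated assumption that every $d_v=1$ (your ``$(k-2)$-regular on $k-1$ vertices'' step and the identity $|V_0|=m+k$ both require it); the vertices with $d_v\geq 2$ and multiplicities $2(k-2)$, which the budget permits, are exactly where the difficulty lives, and you defer them to an unspecified ``refined count of triangles.'' For $\delta=2$ you defer everything. It is not evident that a comparison of $F$-triangle counts against $(m+2k)(k-1)(k-2)/6$ suffices: the paper's proof does not proceed this way, but instead splits on the maximum multiplicity (all equal to $k-2$; some $\geq 3(k-2)$; some equal to $2(k-2)$), and in each case combines Fact~1 with a two-sided squeeze on $|V(G)|$ --- a lower bound from neighbourhood structure (locating adjacent vertices of degree $\geq 2(k-1)(k-2)$, analysing the sets of low- and high-degree neighbours) against the upper bound $|V(G)|\leq q+2k-\nu-2\mu$ coming from the degree sum. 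Without carrying out an analysis of comparable precision for the non-uniform degree sequences and mixed multiplicities, the proof is not complete; as written, the ``technical heart'' you name is missing rather than merely tedious.
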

At the end of this section, we will show that there are infinitely many numbers $ n,k $ such that equality holds in the above inequality (see \pref{rem:equality}).

\begin{proof}
	Let $ n,k $  be such that $ (n-2)\equiv 0 \pmod {k-2} $ and  $ (n-1)(n-2)\equiv 0\pmod {(k-1)(k-2)} $. Also, let $ \beta \equiv n(n-1) (n-2) \pmod {k(k-1)(k-2)} $, where $ 0< \beta< k(k-1)(k-2) $. Thus, $ \beta $ is a multiple of $ (k-1)(k-2) $. Let $ \beta =q (k-1)(k-2) $ for some $ 0<q<k $. Also, we have 
	\[J(n,k,3)= \frac{n(n-1)(n-2) - \beta}{k(k-1)(k-2)}. \]

	Now, for the contrary, suppose that $ D(n,k,3)\geq J(n,k,3)-2 $. So, by \pref{lem:multigraph3}, there exists a multigraph $ G $ on $ n $ vertices satisfying the following conditions.

%
	\begin{itemize}
		\item[(i)] $ 2|E( G )|=\beta+2k(k-1)(k-2) $,
		\item[(ii)] $\forall\ x\in V( G )$,\  $  d_{G}(x) \equiv 0 \pmod {(k-1)(k-2)}  $, 
		\item[(iii)] $ \forall x\neq y\in V( G ) $, $ m_{ G }(xy) \equiv 0 \pmod {k-2} $, and
		\item[(iv)] $ G $ admits a distinct $ K_3 $-decomposition. 
	\end{itemize}
	Since $|E( G )|$ does not depends on $ n $, most of the vertices are isolated. Now, remove all isolated vertices and by abuse of notation, denote the obtained multigraph by $ G $.  
	In the rest of the proof, we prove that no such multigraph $ G $ exists and this contradiction implies the assertion. Assuming the existence of $ G $, let $ \nu $ (resp. $ \mu $) be the number of vertices of $ G $ with degree $ 2(k-1)(k-2) $ (resp. at least $ 3(k-1)(k-2) $). Then, by Conditions (i) and  (ii) and taking the degree sum, we have 
	\begin{equation}\label{eq:vG}
	|V(G)|\leq q+2k-\nu-2\mu.
	\end{equation}
	Moreover, we frequently use the following fact which is a straightforward consequence of Condition (iv).\\
	
	\textbf{Fact 1.} For every $ x, y\in V( G ) $, if $ m_{G}(xy)=t $, then $ x $ and $ y $ have at least $ t $ common neighbors. \\
	
	We prove the claim in three cases. \\ 
	
	\textbf{Case 1.} The multiplicity of each (existing) edge is equal to $ k-2 $.
	
	First, note that there are at least two adjacent vertices of degree at least $ 2(k-1)(k-2) $. For this, suppose the contrary and let $ x $ be a vertex such that all its neighbors are of degree $ (k-1)(k-2) $. By Fact 1, for every $ y\in N_G(x) $, $ x $ and $ y $ have $ k-2 $ common neighbors and $ y $ has no other neighbor. Thus, the connected component of $ G $ containing $ x $ is the union of copies of $ (k-2)K_k $ sharing the vertex $ x $. Since there is such vertex $x$  in each connected component of $G$, we conclude that all blocks of $ G $ are isomorphic to $ (k-2)K_k $ which is in contradiction with Condition (i) (since $ \beta\neq 0 $). This proves that there are at least two adjacent vertices of degree at least $ 2(k-1)(k-2) $.
	
	Now, Let $ x_0 $ be a vertex of degree at least $ 2(k-1)(k-2) $. Also, let $ X $ be the set of neighbors of $ x_0 $ with degree at least $ 2(k-1)(k-2) $ and let $ Y $ be the set of other neighbors of $ x_0 $ (so, by the above argument, $ X\neq \emptyset $). Then, all neighbors of every vertex in $ Y $ are in $ X\cup Y\cup \{x_0\} $ (because $ y\in Y $ and $ x_0 $ have $ k-2 $ common neighbors and so $ N_{G}(y)\subseteq N_{G}[x_0] $). 
	Also, every vertex in $ X $ has at most $ (k-2) $ neighbors in $ Y $. To see this, let $ x\in X$ be adjacent to $ y\in Y $. Then, the triangle $ \{x_0,x,y\} $ is contained in any distinct $ K_3 $-decomposition of $ G $ (because $ x_0 $ and $ y $ have exactly $ k-2 $ common neighbors including $ x $). On the other hand, the multiplicity of $ x_0x $ is equal to $ k-2 $. So, there are at most $ k-2 $ vertices in $ Y $ adjacent to $ x $. Therefore, every vertex $ x\in X $ has at most $ k-3+|X| $ neighbors in $ X\cup Y $. Thus, $ x $ has at least $ 2(k-1)-(k-3+|X|)-1= k-|X| $ neighbors in $ V(G)\setminus (X\cup Y\cup \{x_0\}) $. Hence, $ |V(G)|\geq 2k-1+k-|X|=3k-|X|-1 $.  Since $\nu+\mu \geq |X|+1 $ and $ q\leq k-1 $, we have $ |V(G)|\geq 3k-|X|-1 \geq 2k+q-\nu-\mu+1  $ which is in contradiction with \eqref{eq:vG}.\\
	
	\textbf{Case 2.} There is an edge with multiplicity at least $3 (k-2) $.
	
	Let $ xy $ be an edge of $ G $ with multiplicity at least $ 3(k-2) $. So, due to Fact 1, $ |V(G)|\geq 3(k-2)+2=3k-4 $ and $ x $ and $ y $ have at least $ 3(k-2) $ common neighbors. Therefore, by Condition~(iii), the degrees of $ x $ and $ y $ are at least $ 3(k-2)+3(k-2)(k-2)= 3(k-1)(k-2) $. Thus, $ \mu\geq 2 $ and by \eqref{eq:vG}, $ |V(G)|\leq q+2k-4\leq 3k-5 $ which is a contradiction.\\
	
	\textbf{Case 3.} The multiplicity of all (existing) edges are either $ (k-2) $ or $ 2(k-2) $ and there is an edge with multiplicity $ 2(k-2) $.
	
	First, we prove that the degree of every vertex is either $ (k-1)(k-2) $, or $ 2(k-1)(k-2) $. Let $ x_0 $ be a vertex in $ V(G) $ and $ X $ (resp. $ Y $) be the set of all neighbors of $ x_0 $, say $ x $ where $ xx_0 $ is of multiplicity $ (k-2) $ (resp. $2(k-2)$). For the contrary, suppose that $ x_0 $ is of degree at least $ 3(k-1)(k-2) $. Then, $ \deg(x_0)= (k-2)|X|+2(k-2)|Y|\geq 3(k-1)(k-2)$. Thus, $ |X|+2|Y|\geq 3(k-1) $. On the other hand, by Fact 1, every vertex in $ Y $ has at least $ 2(k-2) $ common neighbors with $ x_0 $ and so its degree is at least $ 2(k-2)(k-2)+2(k-2)=2(k-1)(k-2) $. Thus, by \eqref{eq:vG}, we have $ |X|+|Y|+1\leq |V(G)|\leq q+2k-|Y|-2 $. Thus, $ |X|+2|Y|\leq q+2k-3\leq 3k-4 $, which is in contradiction with $ |X|+2|Y|\geq 3(k-1) $. Hence, the degree of every vertex is either $ (k-1)(k-2) $, or $ 2(k-1)(k-2) $. 
	
	Now, let $ x_0 $ be such that $ |Y|\geq 1 $. We prove that $ |Y|= 1 $. First, by the above discussion, we have $ \deg(x_0)=2(k-1)(k-2)= (|X|+2|Y|)(k-2) $ and thus, $ |X|+2|Y|= 2(k-1) $. Also, every vertex in $ Y $ has at least $ 2(k-2) $ common neighbors with $ x_0 $ and thus, $ |X|+|Y|\geq 2(k-2)+1 $. Therefore, $ |Y|\leq 1 $. Let $ Y=\{y_0\} $. Then, $ |X|=2k-4 $ and $ y_0 $ is adjacent to all vertices in $ X $.
	
	Next, let $ X_1 $ and $ X_2 $ be the set of all vertices in $ X $ of degree $ (k-1)(k-2) $ and $ 2(k-1)(k-2) $, respectively. It is evident that $ X_1 $ is nonempty (otherwise, by \eqref{eq:vG}, $ |V(G)|\leq q+2\leq k+1 $). We are going to prove that $ X_2 $ is empty. For the contrary, suppose that $ X_2\neq \emptyset $. First, note that every vertex in $ X_1 $ has $ (k-2) $ common neighbors with $ x_0 $ and so has no neighbor in $ V(G)\setminus N[x_0]$.  Also, for every vertex $ x\in X_1 $ and each of its neighbors, say $ x'\neq x_0 $, the triangle $ \{x_0,x,x'\} $ is a member of any distinct $ K_3 $-decomposition of $ G $. This implies that every vertex $ x'\in X_2 $ has at most $ k-2 $ neighbors in $ X_1 $ (because the edge $ x_0x' $ is covered by exactly $ k-2 $ distinct triangles in the $ K_3 $-decomposition). Therefore, every vertex $ x'\in X_2 $ has at most $ k-2+|X_2|-1+2=k+|X_2|-1 $ neighbors in $ N[x_0] $. On the other hand, $ x' $ has at least $ 2(k-1)-1 $ distinct neighbors (because its degree is $ 2(k-1)(k-2) $ and by the above argument, at most one of its adjacent edges could have multiplicity $ 2(k-2) $). Thus, $ x' $ has at least $ 2k-3-(k+|X_2|-1) = k-|X_2|-2 $ neighbors in $ V(G)\setminus N[x_0] $. First, suppose that all vertices in $ X_2 $ have no neighbor in  $ V(G)\setminus N[x_0] $. Thus, $ |X_2|\geq k-2 $ and every vertex in $ X_2 $ has at least $ 2k-3 $ distinct neighbors, so every vertex in $ X_2 $ is adjacent to all vertices in $ X_1 $. This yields that every vertex in $ X_1 $ has at least $ |X_2|+2\geq k $ adjacent vertices which is a contradiction (vertices of degree $ (k-1)(k-2) $ have at most $ (k-1) $ neighbors). Now, suppose that there is a vertex $ x'\in X_2  $ with a neighbor $ x'' $ in  $ V(G)\setminus N[x_0] $. By above arguments, $ x'' $ has no neighbor in $ X_1\cup Y\cup \{x_0\} $. On the other hand, $ x' $ and $ x'' $ have at least $ k-2 $ common neighbors. Therefore, $ x' $ has at least $ (k-2)-(|X_2|-1)+1=k-|X_2| $ neighbors in  $ V(G)\setminus N[x_0] $. Hence, $ |V(G)|\geq 2(k-2)+2+k-|X_2|=3k-|X_2|-2 $.  Also, by \eqref{eq:vG}, we have $ |V(G)|\leq q+2k-(|X_2|+2)\leq 3k-|X_2|-3 $, a contradiction. This shows that $ X_2 $ is empty.
	
	Hence, the vertices in $N[x_0] $ has no neighbor in $ V(G)\setminus N[x_0] $ and the induced subgraph of $ G $ on $ N[x_0] $ has exactly $ k(k-1)(k-2) $ edges. Thus, by Condition (i), the induced subgraph of $ G $ on  $ V(G)\setminus N[x_0] $ has $ {\beta}/{2} $ edges. On the other hand, due to Fact 1 and Condition (ii), this subgraph has at least $ k $ non-isolated vertices and at least $ k(k-1)(k-2)/2 $ edges which is a contradiction. This completes the proof.
\end{proof}

Here, we prove a lower bound for $ D(n,k,3) $, in the conditions of \pref{thm:upper_r0a0b!0}.

\begin{thm} \label{thm:lower_r0a0b!0}
	For every positive integer $ k\geq 3 $, there exists a positive integer $ n_0 $ such that for every $ n\geq n_0 $, if $ (n-2)\equiv 0 \pmod{k-2} $ and $ (n-1)(n-2)\equiv 0\pmod {(k-1)(k-2)} $, then $ D(n,k,3)\geq J(n,k,3)-O(k) $.
\end{thm}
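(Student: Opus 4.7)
The plan is to apply \pref{lem:multigraph3} with $\xi=J(n,k,3)-c'$ for some $c'=O(k)$ and to construct the required multigraph $G$ on $n$ vertices. Under the divisibility hypotheses on $n$, conditions (ii)--(iii) of that lemma reduce to requiring every degree of $G$ divisible by $(k-1)(k-2)$ and every multiplicity by $(k-2)$. Writing the remainder $n(n-1)(n-2)-k(k-1)(k-2)J(n,k,3)=q(k-1)(k-2)$ with $1\leq q\leq k-1$, we must arrange $2|E(G)|=q(k-1)(k-2)+c'\cdot k(k-1)(k-2)$. The construction will be a vertex-disjoint union of small gadgets occupying only $O(k)$ vertices in total and using multiplicities of size $O(k)$, so that the bulk of the $n$ vertices remain isolated and hypothesis (v) of \pref{lem:multigraph3} is automatic once $n$ is large enough.

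The construction assembles $G$ from two kinds of vertex-disjoint gadgets, each satisfying the local divisibility constraints and carrying a distinct $K_{3}$-decomposition. The base gadget $B_{0}:=(k-2)K_{k}$ admits such a decomposition by \pref{thm:triangle}, has every vertex of degree $(k-1)(k-2)$, and contributes $k(k-1)(k-2)$ to $2|E(G)|$; equivalently, it contributes $0$ modulo $k$ to the normalized quantity $2|E(G)|/((k-1)(k-2))$. Correction gadgets realizing other residues are of two kinds: multigraphs $(k-2)K_{m}$ with $m\equiv 1\pmod{k-1}$ whose distinct $K_{3}$-decomposition is again supplied by \pref{thm:triangle} (when its mod-$3$ divisibility holds), contributing normalized residue $-t(t-1)\pmod{k}$ when $m=1+(k-1)t$; and complete multipartite multigraphs $(k-2)K_{g,g,\ldots,g}$ underlying simple $(3,k-2)$-GDD$(g^{u})$s supplied by \pref{cor:simpleGDD}, such as those of type $(k-1)^{3}$ (normalized residue $-6\pmod{k}$). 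Letting $G$ be the vertex-disjoint union of a bounded-in-$k$ selection of these correction gadgets together with as many extra copies of $B_{0}$ as needed to match the target $2|E(G)|$, one obtains a multigraph satisfying all of (i)--(v), and \pref{lem:multigraph3} then produces a $3$-packing of size $\xi$.

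The main obstacle is combinatorial completeness of the gadget menu: for every target residue $q\in\{1,\ldots,k-1\}$, one must exhibit a bounded-in-$k$ combination of the above gadgets whose normalized residues sum to $q$ modulo $k$. Since $t(t-1)$ is always even, the first family alone never hits odd residues modulo $k$ when $k$ is even, so the GDD gadgets are essential in that regime. Moreover, the divisibility constraints of \pref{thm:triangle} and \pref{cor:simpleGDD} (parity of $\lambda(m-1)$ and divisibility of $\lambda m(m-1)$ by $3$) depend on $k\bmod 6$, so the verification splits into a short case analysis on $k\bmod 6$, employing GDDs of types such as $(k-1)^{3}$, $((k-1)/2)^{3}$, or $(k-1)^{4}$ to fill in any remaining residues. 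Once completeness is established, each correction gadget has $O(k^{3})$ edges and there are only boundedly many of them, so the total $c'$ is $O(k)$, yielding $D(n,k,3)\geq J(n,k,3)-O(k)$.
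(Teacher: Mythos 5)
Your overall architecture matches the paper's: reduce to \pref{lem:multigraph3}, and build $G$ as a vertex-disjoint union of gadgets of the form $(k-2)K_{1+(k-1)t}$ (given distinct $K_3$-decompositions by \pref{thm:triangle}) together with isolated vertices, so that all that remains is to hit the target residue $q$ of the normalized edge count modulo $k$. But the step you yourself flag as ``the main obstacle'' --- completeness of the gadget menu over all residues $q\in\{1,\dots,k-1\}$ --- is exactly the crux, and your proposal does not close it. The missing idea is that the hypotheses force $q$ to inherit divisibility from $k$: since $(n-2)\equiv 0\pmod{k-2}$, an even $k$ forces $n$ to be even, and then $q=n\cdot\tfrac{(n-1)(n-2)}{(k-1)(k-2)}-kJ(n,k,3)$ is even; likewise $3$ always divides $\beta$ (as $6$ divides both $n(n-1)(n-2)$ and $k(k-1)(k-2)$), so $3\mid k$ forces $3\mid q$. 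With this observation a single gadget family suffices: take $t$ disjoint copies of $(k-2)K_{l(k-1)+1}$ with $l=2$ if $k\not\equiv 0\pmod 3$ and $l=3$ otherwise, and solve $l(1-l)t\equiv q\pmod k$, which is solvable precisely because $\gcd(l(1-l),k)$ divides $q$. No case analysis on $k\bmod 6$ and no GDD gadgets are needed.

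Moreover, your proposed escape route for the parity problem would not work as described. When $k$ is even, every gadget on your menu has an even normalized residue: $-t(t-1)$ is always even, the multipartite gadgets of types $(k-1)^3$ and $(k-1)^4$ contribute $-6$ and $-12$ modulo $k$, and type $((k-1)/2)^3$ is unavailable because $(k-1)/2$ is not an integer. So if an odd residue $q$ could occur for even $k$, your menu could not reach it; the construction is rescued only by the fact, noted above, that this case never arises. Two smaller points: to reach a general residue you need up to $\Theta(k)$ copies of a gadget (e.g.\ $t\approx k/2$ in $-2t\equiv q\pmod k$), not a bounded number, which is why the resulting deficit is $O(k)$ rather than $O(1)$; and when you do use several distinct gadget types you must still check that every vertex degree is a multiple of $(k-1)(k-2)$ (your listed gadgets do satisfy this, but it should be said).
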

\begin{proof}
	Let $ n,k $  be such that $ (n-2)\equiv 0 \pmod{k-2} $ and  $ (n-1)(n-2)\equiv 0\pmod {(k-1)(k-2)} $. Also, let $ \beta \equiv n(n-1) (n-2) \pmod {k(k-1)(k-2)} $, where $ 0\leq \beta< k(k-1)(k-2) $. Thus, $ \beta $ is a multiple of $ (k-1)(k-2) $. Let $ \beta =q (k-1)(k-2) $ for some $ 0\leq q<k $. Thus,
	\[J(n,k,3)= \frac{n(n-1)(n-2) - \beta}{k(k-1)(k-2)}. \]
	Note that if $ k $ is even, then $ n $ is also even and so $ q $ is also even. Also, $ \beta $ is always divisible by $ 3 $, so if $ k $ is divisible by $ 3 $, then $ q $ is divisible by $ 3 $ as well.
	
Now,by \pref{lem:multigraph3}, it suffices to prove that there exists a multigraph $ G $ on $ n $ vertices satisfying Conditions (ii)-(v), with  $ 2|E(G)|= {\beta + O(k) k(k-1)(k-2)}$.


Fix some positive integer $ l $ and let $ G^l $ be the multigraph $(k-2)K_{l(k-1)+1} $. It is clear that $ G ^l$ satisfies Conditions (ii), (iii) and (v). Also, we have
\[2|E(G^l)|= (k-2)(k-1)l(l(k-1)+1).\]
If $ k\equiv 1,2 \pmod{3} $, then set $ l=2 $ and if $ k\equiv 0\pmod{3} $, then set $ l=3 $. Thus, $ G^l $ satisfies the conditions of \pref{thm:triangle} (note that the degree of each vertex is equal to $ l(k-1)(k-2) $ which is always an even number). Therefore, $ G^l $ admits a distinct $ K_3 $-decomposition.  

Now, let $ t,c $ be some integers such that $ 0<t<k $ and $ c\in\{1,2\} $ and $ l(1-l)t +kc= q $ (such $ t,c $ exist, because $ \gcd(l(1-l),k) $ divides $ q $). Also, let $ G  $ be the multigraph on $ n $ vertices obtained from $ t $ disjoint copies of $ G^l $ by adding some isolated vertices.  Therefore, evidently $ G $ satisfies Conditions (ii)-(v) of \pref{lem:multigraph3} and we have
\[2|E(G)|= 2t|E(G^l)|=q(k-1)(k-2)+(tl^2-c)k(k-1)(k-2)=\beta+ O(k)k(k-1)(k-2).  \]
This completes the proof. 
\end{proof}

\begin{rem}\label{rem:equality}
Note that in the proof of  \pref{thm:lower_r0a0b!0}, if $ k\equiv 1,2 \pmod{3} $ and $ q=k-2 $, then $l=2$ and $ t=c=1 $ and so $ tl^2-c=3 $. This implies that $ D(n,k,3)\geq J(n,k,3)-3 $ and by \pref{thm:upper_r0a0b!0}, we have $ D(n,k,3)=J(n,k,3)-3 $. Hence, there are infinitely many numbers $ n,k $ for which equality holds in the upper bound given in \pref{thm:upper_r0a0b!0}. 
\end{rem}

\section{The case $ (n-2)\equiv 0 \pmod { k-2} $ and $ (n-1)(n-2)\not\equiv  0\pmod {(k-1)(k-2)} $\label{sec:3}}
Finally, as the last task, we look into the case that $ (n-2)\equiv 0 \pmod { k-2} $ and $ (n-1)(n-2)\not\equiv 0\pmod {(k-1)(k-2)} $.
Due to the recursion \eqref{eq:recursion} as well as \eqref{eq:t=2}, we have the following upper bound for $ D(n,k,3) $.
\begin{cor} \label{thm:upper_r0a!0}
	Let $ n,k $ be two positive integers such that $ n>k\geq 4 $, $ (n-2)\equiv 0 \pmod{k-2} $ and $ (n-1)(n-2)\not\equiv 0\pmod {(k-1)(k-2)} $. Then, $ D(n,k,3)\leq J'(n,k,3) $, where
	\[J'(n,k,3):=\left\lfloor\frac{n}{k} \left(\left\lfloor \frac{(n-1)(n-2)}{(k-1)(k-2)} \right\rfloor-1\right) \right\rfloor. \]
\end{cor}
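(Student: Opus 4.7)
The plan is to derive the bound as a direct corollary of the recursion~\eqref{eq:recursion} combined with the Caro--Yuster formula~\eqref{eq:t=2} for the case $t=2$. Concretely, the first part of \eqref{eq:recursion} gives
\[
D(n,k,3) \le \left\lfloor \frac{n}{k}\, D(n-1,k-1,2) \right\rfloor,
\]
so it suffices to identify $D(n-1,k-1,2)$ under the given divisibility hypotheses.

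I would then substitute $(n,k)\mapsto(n-1,k-1)$ into \eqref{eq:t=2}. The first branch of that formula requires either $(n-1)-1\not\equiv 0\pmod{(k-1)-1}$ or $(n-1)(n-2)\equiv 0\pmod{(k-1)(k-2)}$. By hypothesis $(n-2)\equiv 0\pmod{k-2}$, so the first condition fails, and by hypothesis $(n-1)(n-2)\not\equiv 0\pmod{(k-1)(k-2)}$, so the second condition fails as well. Hence we are in the ``otherwise'' branch of \eqref{eq:t=2}, yielding
\[
D(n-1,k-1,2) \;=\; \left\lfloor \frac{(n-1)(n-2)}{(k-1)(k-2)} \right\rfloor - 1,
\]
provided $n$ is large enough for \eqref{eq:t=2} to apply (which is an implicit hypothesis inherited from \cite{caro,caro2}).

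Plugging this back into the recursion gives exactly
\[
D(n,k,3) \le \left\lfloor \frac{n}{k}\!\left(\left\lfloor \frac{(n-1)(n-2)}{(k-1)(k-2)} \right\rfloor - 1\right)\right\rfloor = J'(n,k,3),
\]
which is the claim. There is no real obstacle here: the statement is a one-line corollary once the two hypotheses are checked to correctly select the non-trivial branch of \eqref{eq:t=2}. The only subtle point worth remarking on is that the result is implicitly for $n$ sufficiently large (so that the Caro--Yuster formula applies); this is harmless since the main theorem is stated for $n\ge n_0(k)$.
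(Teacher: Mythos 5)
Your proposal is correct and is exactly the paper's own argument: the paper derives this corollary in one line from the recursion \eqref{eq:recursion} together with the Caro--Yuster formula \eqref{eq:t=2}, with the two hypotheses ruling out the first branch of \eqref{eq:t=2} just as you describe. Your remark about the implicit ``$n$ sufficiently large'' caveat is a fair observation (only the upper-bound direction of the ``otherwise'' branch is needed, and that direction is in fact a divisibility-obstruction argument), but it does not change the fact that your route coincides with the paper's.
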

Analogous to previous cases, one may expect that the difference of $ D(n,k,3) $ and $ J'(n,k,3) $ should be bounded by a function depending only on $ k $. However, as a matter of surprise, this is not the case at least for infinitely many $ k $. In fact, there are infinitely many $ k $ for which $ J'(n,k,3)-D(n,k,3) $ tends to infinity as $ n $ grows. 
This is what we prove in the following theorem.

\begin{thm}\label{thm:p2}
Let $ n,k $ be positive integers such that $ k\equiv 0\pmod{6} $, $k\geq 12$, $ (n-2)\equiv 0 \pmod { k-2} $ and $ (n-1)(n-2)\equiv 2(k-2)\pmod {(k-1)(k-2)} $. Then $ D(n,k,3)\leq J'(n,k,3)-\lfloor\frac{n}{3k^3}\rfloor $.
\end{thm}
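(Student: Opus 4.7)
The plan is to refine the link-based recursion bound by extracting a quantitative link deficiency at each vertex. Under the hypotheses $(n-2)\equiv 0\pmod{k-2}$ and $(n-1)(n-2)\equiv 2(k-2)\pmod{(k-1)(k-2)}$, formula~\eqref{eq:t=2} applies in its ``otherwise'' case at level $(n-1,k-1,2)$, giving $D(n-1,k-1,2)=M-1$ where $M(k-1)(k-2)=(n-1)(n-2)-2(k-2)$. For a $3$-packing $\mathcal B$ realizing $D(n,k,3)$ and each vertex $u$, the link $\mathcal B_u=\{B\setminus\{u\}:u\in B\in\mathcal B\}$ is a $2$-$(n-1,k-1,1)$ packing, so $|\mathcal B_u|\le M-1$. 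Setting $d_u:=(M-1)-|\mathcal B_u|\ge 0$ and $T:=\sum_u d_u$, double counting gives $|\mathcal B|=(n(M-1)-T)/k$, so the target conclusion is equivalent to the quantitative link-deficiency bound
\[T\ge r+k\lfloor n/(3k^3)\rfloor,\qquad r:=n(M-1)-kJ'(n,k,3)\in[0,k).\]

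Next I would characterize the tight case $d_u=0$. Since $(n-2)\equiv 0\pmod{k-2}$, the vertex degrees in the leave graph $L_u$ of $\mathcal B_u$ are nonnegative multiples of $k-2$; combined with $|E(L_u)|=(k+1)(k-2)/2$, a handshake argument forces $L_u$ to be supported on at most $k+1$ vertices, and a short case analysis shows the only consistent configuration is a $(k-2)$-regular simple graph on exactly $k+1$ vertices, equivalently the complement in $K_{k+1}$ of some 2-regular graph (a disjoint union of cycles). Passing to the leave multigraph $G$ produced by \pref{lem:multigraph3}, the tight vertices $u$ are exactly those with the minimum possible value $\deg_G(u)=(k+1)(k-2)$, whereas every non-tight vertex has $\deg_G(u)\ge(k+1)(k-2)+(k-1)(k-2)=2k(k-2)$.

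The main obstacle is a component-size lemma: under $k\equiv 0\pmod 6$, any connected sub-multigraph $H$ of $G$ all of whose vertices attain the minimum degree $(k+1)(k-2)$ must satisfy $|V(H)|\ge 3k^3$. The base case $H=(k-2)K_{k+2}$ fails immediately via \pref{thm:triangle}: with $\lambda=k-2$ and $n=k+2$, the congruence $k\equiv 0\pmod 6$ yields $(k-2)(k+1)(k+2)\equiv 4\cdot 1\cdot 2=8\equiv 2\pmod 6$, so $(k-2)K_{k+2}$ admits no simple (hence no distinct) triangle decomposition. To obtain the general bound one propagates this modular obstruction through the constraints $(k-2)\mid m_G(xy)$, $(k-1)(k-2)\mid \deg_G(x)-2(k-2)$, and the existence criteria for simple $(3,k-2)$-GDDs supplied by \pref{cor:simpleGDD}; the three factors of $k$ in $3k^3$ arise respectively from the $(k+1)$-sized leaves, the $(k-2)$-scaling of multiplicities, and the $k(k-1)(k-2)$-sized modulus governing block divisibility. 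This is the technically hard step, requiring one to show that no ``repair'' of the $K_{k+2}$ obstruction of lower complexity fits inside fewer than $3k^3$ vertices.

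Granting the component-size lemma, the argument finishes by counting: the set $V':=\{u:d_u=0\}$ of tight vertices is covered by min-degree sub-multigraphs of $G$, each of size $\ge 3k^3$, so there can be at most $\lfloor n/(3k^3)\rfloor$ such maximal pieces, and every vertex outside them lies in a non-tight component. Combining with the divisibility $T\equiv r\pmod k$ dictated by the integrality of $|\mathcal B|$, each unit of structural obstruction costs $k$ units of $T$, yielding $T\ge r+k\lfloor n/(3k^3)\rfloor$ and therefore $D(n,k,3)\le J'(n,k,3)-\lfloor n/(3k^3)\rfloor$.
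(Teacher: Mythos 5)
Your setup is sound and is essentially equivalent to the paper's: the link-deficiency quantity $T=\sum_u d_u$ satisfies $2|E(G)|=n(k+1)(k-2)+T(k-1)(k-2)$ for the leave multigraph $G$ of \pref{lem:multigraph3}, your arithmetic reduction to $T\ge r+k\lfloor n/(3k^3)\rfloor$ is correct, and your characterization of tight vertices (degree exactly $(k+1)(k-2)$ in $G$, all incident multiplicities $k-2$, exactly $k+1$ neighbours) matches the paper's ``good'' vertices. The gap is in the step that actually produces the linear-in-$n$ deficiency. Your ``component-size lemma'' is not proved --- you explicitly defer it as ``the technically hard step'' --- and as stated it is not even well-posed: a single tight vertex is a connected sub-multigraph all of whose vertices attain the minimum degree, so the bound $|V(H)|\ge 3k^3$ cannot be meant literally, and if it is reinterpreted as a statement about connected components of $G$ consisting entirely of tight vertices, it still does not yield the conclusion. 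Your closing count runs in the wrong direction: knowing there are \emph{at most} $\lfloor n/(3k^3)\rfloor$ maximal all-tight pieces gives no lower bound on the number of non-tight vertices or on $T$ (for instance, a single component containing one bad vertex and $n-1$ good ones has zero all-tight components and is not excluded by anything you prove). The congruence $T\equiv r\pmod k$ only restricts $T$ to a residue class; it cannot convert a count of components into a lower bound of order $n/k^2$.

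The paper's proof fills exactly this hole with a local-plus-averaging argument. Locally, it shows (Fact 2 there) that \emph{no} vertex has an entirely good closed $2$-neighbourhood: if it did, the component would be forced to have between $k+2$ and $k+4$ vertices, the triangle-decomposability condition $3\mid |V(H)|(k+1)(k-2)$ together with $k\equiv 0\pmod 6$ would force $|V(H)|=k+3$, and the $(k+1)$-regularity of the underlying simple graph would then contradict parity since $k$ is even. Note this is not your base case $(k-2)K_{k+2}$ (whose failure via \pref{thm:triangle} you verify correctly but which is only one of the possible local configurations); the decisive obstruction is the handshake parity at $k+3$ vertices. Globally, the paper converts ``every vertex is within distance $2$ of a bad vertex'' into a lower bound on $\sum_x\deg_F(x)$ of the form $(k+1+\frac{k-4}{2k^2-k-2})n$ by taking a weighted combination of two degree-sum estimates (one exploiting that bad vertices have underlying degree at least $2k-3$, the other that the bad vertices $2$-dominate everything), which is precisely the statement $T=\Omega(n/k^2)$ that your reduction requires. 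Without an argument of this kind your proposal does not close.
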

\begin{proof}
Suppose that $ n,k $ satisfy the given conditions. For the contrary, suppose that $D(n,k,3)\geq J'(n,k,3)-\lfloor\frac{n}{3k^3}\rfloor+1$. Thus, by \pref{lem:multigraph3}, there is a multigraph $ G $ satisfying Conditions (i)-(iv). By Condition (ii), for every vertex $ x\in V(G) $, we have $ \deg_G(x)\equiv 2(k-2) \pmod{(k-1)(k-2)} $. First, note that there is no vertex $ x\in V(G) $ with $ \deg(x)=2(k-2) $. To see this, let $ y $ be a neighbor of $ x $. Since $ m_G(xy)\geq k-2 $ and $ G $ has a distinct $ K_3 $-decomposition, $ x $ and $y$ must have at least $ k-2 $ distinct common neighbors. Thus, $ \deg(x)\geq (k-1)(k-2)>2(k-2) $. Therefore, for every vertex $ x\in V(G) $, $\deg(x)\geq 2(k-2)+(k-1)(k-2)= (k+1)(k-2)$. We call a vertex of degree $ (k+1)(k-2) $ a \textit{good vertex} and other vertices as \textit{bad} ones. Also, let $ B $ be the set of all bad vertices.
Now, we prove some facts.\\

\textbf{Fact 1.} Every good vertex has exactly $ k+1 $ distinct neighbors and all its adjacent edges have  multiplicity $ k-2 $.\\

To see this, note that all edges on a good vertex $ x $ must be of multiplicity $ k-2 $ (since otherwise, $ x $ should have at least $ 2(k-2)+1 $ distinct neighbors and its degree would be at least $ (2k-3)(k-2) $ which is a contradiction for $ k>4 $), so $ x $ has exactly $ k+1 $ distinct neighbors.

For every vertex $ x\in V(G) $ and positive integer $ d $, the closed (resp. open) $ d$-neighborhood of $ x $, denoted by $ N_d[x] $ (resp. $ N_d(x) $), is defined as the set of all vertices of distance at most (resp. exactly) $ d $ from $ x $. 
In the following fact, we prove that there is at least one bad vertex in the closed $2$-neighborhood of every vertex. \\

\textbf{Fact 2.} For every vertex $ x \in V(G)$, we have $ N_2[x]\cap B\neq \emptyset $.\\

To see this, for the contrary, suppose that for $ x\in V(G) $, all vertices in $ N_2[x] $ are good. By Fact~1, every vertex in $ N_2[x] $ has exactly $ k+1 $ distinct neighbors. First, we prove that $ |N_2(x)|\leq 2 $. For this, we count $e$ the number of edges between $ N_1(x) $ and $ N_2(x) $. For every vertex $ y\in N_1(x) $, $ x$ and $y $ have at least $ k-2 $ common neighbors, so $ y $ has at most two neighbors in $ N_2(x) $. On the other hand, for every vertex $ z\in N_2(x) $ with an adjacent vertex $ y\in N_1(x) $, $ z$ and $y $ have at least $ k-2 $ common neighbors with at least $ k-3 $ of which in $ N_1(x) $. Thus, $ z $ has at least $ k-2 $ distinct neighbors in $ N_1(x) $. This shows that $(k-2) |N_2(x)| \leq e\leq  2|N_1(x)| = 2(k+1) $. This implies that $ |N_2(x)|\leq 2(k+1)/(k-2)<3 $, for $ k>8 $. Now, we prove that $ N_3(x) $ is empty. Take $ w\in N_3(x) $ with a neighbor $ z\in N_2(x) $. Then, $ w $ and $ z $ have at least $ k-2 $ common neighbors in $ N_2(x)\cup N_3(x)  $. Also, $ z $ has at least $ k-2 $ distinct neighbors in $ N_1(x) $. Therefore, $ z $ has at least $ 2k-3 $ distinct neighbors and thus $ z $ is not good, a contradiction. Hence, $ N_3(x) $ is empty. 

Let $ H $ be the connected component of $ G $ containing $ x $. Then, by the above argument, $ k+2\leq |V(H)|\leq k+4  $. Since $ H $ has a distinct $ K_3 $-decomposition, $ 2|E(H)|=|V(H)|(k+1)(k-2) $ is a multiple of $ 3 $. Thus, since $k\equiv 0\pmod 6$, we have $|V(H)|=k+3$. On the other hand, the underlying simple graph of $ H $ is $ k+1 $ regular, so $ |V(H)| $ should be even. This leads us to a contradiction with $ k\equiv 0\pmod{6} $ and thus Fact 2 is proved. 

Now, we find a lower bound for the number of edges of $ G $. Let $ F $ be the underlying simple graph obtained from $ G $. By definition, for every vertex $ x\in V(G)\setminus B $, we have $ \deg_F(x)=k+1 $. Also, for every $ x\in B $, if multiplicities of all edges incident with $x$ are equal to $k-2$,  we have $ \deg_F(x)\geq 2+2(k-1)=2k $ and otherwise, we have $\deg_F(x)\geq 2(k-2)+1=2k-3$. Therefore, 
\begin{equation}
\sum_{x\in V(G)}\deg_F(x) \geq (k+1)n+(k-4)|B|. \label{eq:f3}
\end{equation}

Let $ X $ be the set of vertices in $ V(G)\setminus B $ which have at least one neighbor in $ B $ and let $ Y= V(G)\setminus(B\cup X) $. By Fact 2, every vertex in $ Y $ has a neighbor in $ X $ and by definition, every vertex in $ X $ has a neighbor in $B$. So, $|Y|\leq |E(X,Y)|\leq k|X|$ and thus 
\begin{equation}\label{eq:f1}
|V(G)|=n\leq |B|+(k+1)|X|.
\end{equation}
Now, we have
\begin{align}
\sum_{x\in V(G)} \deg_F(x) &= 
\sum_{x\in B} \deg_F(x)+(k+1)(n-|B|) \nonumber\\
&\geq |X|+(k+1)(n-|B|) \nonumber \\
&\geq \frac{n-|B|}{k+1}+(k+1)(n-|B|)=\frac{1+(k+1)^2}{k+1}(n-|B|), \label{eq:f2}
\end{align}
where the last inequality is due to \eqref{eq:f1}. Thus, by \eqref{eq:f3} and \eqref{eq:f2}, we have
\begin{align*}
\sum_{x\in V(G)}\deg_F(x) &\geq \max\{(k+1)n+(k-4)|B|,\frac{1+(k+1)^2}{k+1}(n-|B|)\}\\
&\geq \frac{w_1((k+1)n+(k-4)|B|)+w_2(\frac{1+(k+1)^2}{k+1}(n-|B|))}{w_1+w_2},
\end{align*}
for every positive numbers $ w_1,w_2 $. Set $ w_1=1+(k+1)^2 $ and $ w_2=(k-4)(k+1) $. Then, we have
\[\sum_{x\in V(G)}\deg_F(x) \geq (k+1+\frac{k-4}{2k^2-k-2})n. \]
Hence, 
\begin{equation}\label{eq:f4}
2|E(G)|\geq 2(k-2)|E(F)|\geq (k+1+\frac{k-4}{2k^2-k-2})(k-2)n.
\end{equation}
On the other hand,
\begin{align*}
J'(n,k,3)&=\left\lfloor\frac{n}{k} \left(\frac{(n-1)(n-2)-2(k-2)}{(k-1)(k-2)} -1\right) \right\rfloor\\
&=\left\lfloor\frac{n(n-1)(n-2)-n(k+1)(k-2)}{k(k-1)(k-2)} \right\rfloor.\\
\end{align*}
Thus, by Condition (i) of \pref{lem:multigraph3}, we have
\begin{align*}
2|E(G)|&= n(n-1)(n-2)- k(k-1)(k-2) \left(J'(n,k,3)-\left\lfloor\frac{n}{3k^3}\right\rfloor+1\right) \\
&< n\left((k+1)(k-2)+\frac{(k-4)(k-2)}{2k^2-k-2}\right),
\end{align*}
which is in contradiction with \eqref{eq:f4}.
\end{proof}

Now, we prove a lower bound for $ D(n,k,3) $ in terms of $ J'(n,k,3) $.

\begin{thm} \label{thm:lower_r0a!0}
	For every positive integer $ k\geq 4 $, there exists a positive integer $ n_0 $ such that for every $ n\geq n_0 $, the following statement holds.
	
	Suppose that $ (n-2)\equiv 0 \pmod{k-2} $ and $ (n-1)(n-2)\equiv  p(k-2)\pmod {(k-1)(k-2)} $ for some integer $p$, $1\leq p\leq k-2$. Subsequently, if 
	\begin{equation}\label{eq:*}
	\tag{$*$}
	\text{ neither } k,p\equiv 4\pmod 6 \text{ nor } k, (p-2)\equiv 0 \pmod 6,
	\end{equation} 
	then, we have $ D(n,k,3)\geq J'(n,k,3)-O(k^{3+o_k(1)}) $. Otherwise, i.e. if either $k,p\equiv 4\pmod 6$ or $k,(p-2)\equiv 0 \pmod 6$, then we have  $ D(n,k,3)\geq J'(n,k,3)-O(k^{o_k(1)})n $.
	
%
\end{thm}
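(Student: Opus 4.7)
The plan is to invoke \pref{lem:multigraph3} and construct the leave multigraph $G$ explicitly. Writing $(n-1)(n-2) = p(k-2) + q(k-1)(k-2)$, one verifies
\[
J'(n,k,3) = \left\lfloor \frac{n(n-1)(n-2) - n(k-2)(p+k-1)}{k(k-1)(k-2)} \right\rfloor,
\]
so for $\xi = J'(n,k,3) - e$, the target edge count is $2|E(G)| = n(k-2)(p+k-1) + \beta_0 + e\,k(k-1)(k-2)$ for an explicit $\beta_0 \in [0, k(k-1)(k-2))$. Because Condition~(ii) of \pref{lem:multigraph3} forces every vertex of $G$ to have degree at least $p(k-2) > 0$, no vertex can be isolated; hence $G$ must be a vertex-disjoint union of ``small'' components, each of which individually satisfies the degree congruence mod $(k-1)(k-2)$ and the multiplicity congruence mod $(k-2)$.

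The base building block will be $H_m := (k-2)K_m$ with $m \equiv p+1 \pmod{k-1}$, giving every vertex degree $(k-2)(m-1) \equiv p(k-2) \pmod{(k-1)(k-2)}$ and every edge multiplicity $k-2$, so Conditions (iii) and (v) hold. By \pref{thm:triangle}, $H_m$ admits a distinct $K_3$-decomposition iff $m \geq k$, $(k-2)(m-1)$ is even, and $(k-2)m(m-1) \equiv 0 \pmod 3$. A short case analysis on $k, p \pmod 6$ reveals: when condition \eqref{eq:*} holds, the choice $m = p+k$ works, yielding a base component of size $\Theta(k)$ whose vertex degree $(k-2)(p+k-1)$ exactly matches the target average; when \eqref{eq:*} fails and $k, p \equiv 4 \pmod 6$, we have $k-1 \equiv 0 \pmod 3$ and $p+1 \equiv 2 \pmod 3$, so all admissible $m$ satisfy $m \equiv 2 \pmod 3$ and no $H_m$ decomposes---forcing us to substitute a simple $(3, k-2)$-GDD from \pref{cor:simpleGDD}; when \eqref{eq:*} fails and $k \equiv 0, p \equiv 2 \pmod 6$, the smallest admissible $m$ is $p+2k-1$, inflating vertex degree by $(k-1)(k-2)$ above target.

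Assembly: I take $G$ to be the disjoint union of $t_1$ copies of $H_{m_1}$ and $t_2$ copies of $H_{m_2}$ (with possibly a constant-size correction component), requiring $t_1 m_1 + t_2 m_2 = n$. By the Frobenius condition this Diophantine equation is solvable in non-negative integers for large $n$ whenever $\gcd(m_1, m_2) \mid n$; \pref{lem:number} will be used to select admissible sizes $m_1, m_2 \leq k^{1+o_k(1)}$ satisfying the divisibility conditions of \pref{thm:triangle} while simultaneously avoiding the finitely many forbidden residues. In the good case, $(m_1, m_2)$ of size $\Theta(k)$ yield an edge-count precision of $O(k^3)$ per unit adjustment, translating into a block-count error of $O(k^{o_k(1)})$ from the Diophantine residue; a final constant-size piece (another $H_m$ or a small GDD from \pref{cor:simpleGDD}) absorbs the remaining discrepancy, so the total error is $O(k^{3+o_k(1)})$. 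In the bad case, the unavoidable degree surplus of $(k-1)(k-2)$ per vertex accumulates to $\Theta(n(k-1)(k-2))$ extra edges, i.e., $\Theta(n/k)$ extra blocks---consistent with the bound $O(k^{o_k(1)})\,n$ and tight in view of \pref{thm:p2}.

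The principal obstacle is to orchestrate $(m_1, m_2, t_1, t_2)$ so that three constraints hold simultaneously: (a) exact vertex tiling $t_1 m_1 + t_2 m_2 = n$ with $t_i \geq 0$, (b) target edge count within the claimed additive error, and (c) distinct $K_3$-decomposability of each component. The number-theoretic \pref{lem:number} is the key tool enabling this balance by producing admissible $m_i$ of controlled size that satisfy a prescribed list of congruences while dodging a short list of forbidden residues; the $k^{o_k(1)}$ factor in both final bounds traces directly to its $N^{1/\Omega(\log\log N)}$ size guarantee. In the $k,p \equiv 4 \pmod 6$ sub-case, one must additionally verify that a suitable simple $(3, k-2)$-GDD of the required total edge count exists, which follows from \pref{cor:simpleGDD} after selecting parameters $g, u$ via \pref{lem:number}.
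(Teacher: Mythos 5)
Your overall architecture matches the paper's: reduce to a leave multigraph via \pref{lem:multigraph3}, tile $n$ vertices by disjoint small components whose degrees are $\equiv p(k-2)\pmod{(k-1)(k-2)}$ and whose edge multiplicities are $k-2$, solve a two-term Diophantine equation for the component counts, and use \pref{lem:number} to control parameter sizes. However, there is a genuine gap in your central claim that ``when condition \eqref{eq:*} holds, the choice $m=p+k$ works.'' Take $k\equiv 1\pmod 6$ and $p\equiv 4\pmod 6$ (e.g.\ $k=7$, $p=4$, realized at $n=12$ and infinitely many larger $n$): here \eqref{eq:*} holds, since $k$ is odd, yet $(k-2)K_{p+k}=5K_{11}$ violates the condition $\lambda n(n-1)\equiv 0\pmod 3$ of \pref{thm:triangle} ($5\cdot 11\cdot 10=550\not\equiv 0\pmod 3$), so it has no distinct $K_3$-decomposition. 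Worse, no admissible $m$ repairs this: your constraint $m\equiv p+1\pmod{k-1}$ together with $3\mid k-1$ forces $m\equiv p+1\equiv 2\pmod 3$ for \emph{every} choice of $m$, so $(k-2)m(m-1)\equiv 2\cdot 2\cdot 1\not\equiv 0\pmod 3$ always. Hence in this case (and symmetrically for $k\equiv 3\pmod 6$, $p\equiv 2\pmod 6$, and for odd $p$ with $k\equiv 0,4\pmod 6$) your primary building block is unavailable even though the theorem promises the strong $O(k^{3+o_k(1)})$ bound, and inflating $m$ to the next residue class costs $(k-1)(k-2)$ extra degree per vertex, i.e.\ $\Theta(n/k)$ extra blocks, which would only prove the weak $O(\cdot)\,n$ bound there.

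The repair is exactly what the paper does and what you relegate to a side remark: one must use complete multipartite components $G^{g,l}$ with group size $g\geq 2$ (coming from simple $(3,k-2)$-GDD$(g^u)$s via \pref{cor:simpleGDD}), e.g.\ $(g,l)=(2,1)$ in the $k=7$, $p=4$ example, where the extra additive freedom in $u=(p+l(k-1))/g+1$ lets the mod-$3$ condition be met while keeping the degree at exactly $(p+k-1)(k-2)$. Making this work in all residue classes is the bulk of the paper's proof: $g$ must divide $p+l(k-1)$, satisfy a mod-$3$ and parity condition, and the resulting component orders $n_1,n_2,n_3$ must satisfy $\gcd(n_1,n_2)\mid n-n_3$ (the paper's Facts 1 and 2) and the exact congruence $2|E(G)|\equiv \beta+n(p+k-1)(k-2)\pmod{k(k-1)(k-2)}$ (the paper's condition (f), arranged through a carefully built third component $G_3$ of size $O(k^{3+o_k(1)})$). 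Your proposal asserts a ``final constant-size piece absorbs the remaining discrepancy'' without verifying that such a piece with the required edge count modulo $k(k-1)(k-2)$ and the required decomposability actually exists; this absorption step is nontrivial and occupies most of the paper's argument.
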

\begin{proof}[\rm \textbf{Proof of \pref{thm:lower_r0a!0}}]
	Let $ n,k $  be such that $ (n-2)\equiv 0 \pmod{k-2} $. Also, let  $\alpha\equiv  (n-1)(n-2) \pmod {(k-1)(k-2)} $, where $ 0<\alpha < (k-1)(k-2) $.
	This implies that $ \alpha $ is a multiple of $ (k-2) $. Let $ \alpha = p(k-2) $, where $ 0 <p< k-1 $. So, 
\begin{align*}
J'(n,k,3)&=  \left\lfloor \frac{n(n-1)(n-2)- n(p+k-1) (k-2)}{k(k-1)(k-2)} \right\rfloor \\[4pt]
&= \frac{n(n-1)(n-2)- n(p+k-1) (k-2)-\beta }{k(k-1)(k-2)},
\end{align*}
	 where $ \beta \equiv n(n-1) (n-2)- n(p+k-1) (k-2) \pmod {k(k-1)(k-2)} $, $ 0\leq \beta< k(k-1)(k-2) $. 
	 Thus, $ \beta $ is a multiple of $ (k-1)(k-2) $, and we may write $ \beta = q(k-1)(k-2) $, for some $ 0\leq q<k $.	 
	 By \pref{lem:multigraph3}, it suffices to prove that there exists a multigraph $ G $ on $ n $ vertices satisfying Conditions (i)-(v) such that  $ 2|E(G)|= {\beta + n(p+k-1)(k-2) + O(k^{3+o(1)}) k(k-1)(k-2)} $.

In order to construct such a multigraph $ G $, we do the following steps. First, we construct three multigraphs $ G_1, G_2,G_3 $ on respectively $ n_1,n_2,n_3 $ vertices with the following properties,
	\begin{itemize}
	\item[(a)]  the multiplicity of each edge in $ G_i$, $i=1,2,3 $, is exactly $ k-2 $,
	\item[(b)] the degree of each vertex in $ G_i$ is equal to $ (p+l_i(k-1))(k-2) $, $i=1,2,3$, where $l_1, l_2,l_3 $ are positive integers. Also, $l_1=1$ when \eqref{eq:*} holds.
	\item[(c)] the graph $ G_i$, $i=1,2,3 $, admits a distinct $ K_3 $-decomposition, 
\item[(d)] $ \gcd(n_1,n_2) $ divides $ n-n_3 $,
\item[(e)]  $n_1=O(k)$ when \eqref{eq:*} holds and $n_1=O(k^2)$, otherwise. Also, $n_2= O(k^{2+o_k(1)})$, $ n_3= O(k^{3+o_k(1)}) $, $l_1, l_2,l_3=O(k^{1+o_k(1)})$, and
\item[(f)] $ n_i (l_i-1) \equiv 0 \pmod k $, $i=1,2$ and $ n_3 (l_3-1) \equiv q\pmod k $.
\end{itemize}

Note that, if $ n $ is sufficiently larger than $ n_1,n_2,n_3 $, then by (d), there exist nonnegative integers $ r_1,r_2 $ such that $ r_2< n_1 $ and $ r_1n_1+r_2n_2=n-n_3 $. Now, let $ G $ be the disjoint union of $ r_1 $ copies of $ G_1 $, $ r_2 $ copies of $ G_2 $ and one copy of $ G_3 $. Properties (a), (b) and (c) yield that $ G $ is a multigraph on $ n $ vertices satisfying Conditions (ii)-(v) of \pref{lem:multigraph3}. On the other hand, by (b), we have

\begin{align}
2|E(G)|&= (r_1n_1 (p+l_1(k-1))+ r_2 n_2(p+l_2(k-1))+n_3 (p+l_3(k-1)))(k-2) \nonumber  \\ 
&= n (p+k-1) (k-2)+ (r_1n_1(l_1-1)+r_2 n_2 (l_2-1)+n_3 (l_3-1)) (k-1)(k-2) \nonumber \\
&= \beta+  n (p+k-1) (k-2)+  (r_1\frac{n_1(l_1-1)}{k}+r_2 \frac{n_2 (l_2-1)}{k}+\frac{n_3 (l_3-1)-q}{k}) k(k-1)(k-2). \label{eq:edges}
\end{align}
Due to (f), the coefficient of $ k(k-1)(k-2) $ is an integer. Also, when \eqref{eq:*} holds, we have $l_1=1$ and $r_2<n_1=O(k)$. Thus, in this case, by (e), we have 
\[2|E(G)|= \beta + n(p+k-1)(k-2) + O(k^{3+o_k(1)})\, k(k-1)(k-2).\]
Otherwise, when \eqref{eq:*} does not hold, by (e) we have 
\[2|E(G)|= \beta + n(p+k-1)(k-2) + O(k^{o_k(1)})n\, k(k-1)(k-2),\]
and by \pref{lem:multigraph3} we are done. The rest of the proof is devoted to the construction of the graphs $ G_1,G_2,G_3 $ in several cases.

We consider all possibilities of $ k $ and $ p $ modulo $ 3 $. First, note that if $ k\equiv 1 \pmod 3 $, then $ (n-1)(n-2)\equiv p(k-2) \pmod 3 \equiv -p \pmod 3 $, so $ p\not\equiv 2 \pmod 3 $. Thus, we have to investigate 8 possibilities. Moreover, $ \alpha $ is always even, so if $ k $ is odd, then $ p $ is even.

In construction of the graphs $ G_1,G_2,G_3 $, we use distinct $ K_3 $-decompositions of the graphs obtained from simple GDDs. Fix positive integers $ g,l $ such that $ g $ divides $ p+l(k-1) $ and define $ u= (p+l(k-1))/g+1 $. Consider the complete multipartite graph on $ gu $ vertices with $ u $ parts of equal size $ g $ and replace each edge with $ k-2 $ parallel edges. Let us denote the obtained graph by $ G^{g,l} $. It is clear that $ |V(G^{g,l})|= gu= p+l(k-1)+g $ and if a simple $ (3,k-2) $-GDD($ g^u $) exists, then $ G^{g,l} $ admits a distinct $ K_3 $-decomposition. 
Therefore, in the light of \pref{cor:simpleGDD}, the graph $ G^{g,l} $ can be chosen as a candidate for the graphs $ G_1,G_2,G_3 $ satisfying Conditions (a)-(c), provided the following conditions hold.


\begin{itemize}
	\item[(a$ ' $)] $ p+l(k-1) \equiv 0 \pmod g $,
	\item[(b$ ' $)] $ (k-2)(p+l(k-1))(p+l(k-1)+g)\equiv 0 \pmod 3 $,
	\item[(c$ ' $)] $p+l(k-1)\geq 2g$ and $ k-2\leq g(u-2)=p+l(k-1)-g $.
\end{itemize}
Note that if $ k $ is odd, then $ p $ is even and so $ (k-2)(p+l(k-1)) $ (the degree of each vertex in $ G^{g,l} $) is always even.
\paragraph{Construction of $ G_3 $.}

Let $ p_1,\ldots,p_r $ and $ p_{r+1},\ldots,p_{r+s} $ be respectively the distinct prime powers except powers of $2$ and $3$ in $ k $ and $ k-1 $. Let $ p'\in\{0,2,4\} $ be such that $ p'\equiv p\pmod{3}$. Applying \pref{lem:number},  there exists some integer $ g_3=O(k^{1/\Omega({\log \log k)}}) $, satisfying the following conditions,

\begin{align}
&g_3\not \equiv 0\pmod{p_i},\forall\ 1\leq i\leq r+s \nonumber\\
&g_3\not \equiv p'-p\pmod{p_i},\forall\ 1\leq i\leq r\nonumber	\\
&g_3\equiv 1\pmod{2} \text{ and } g_3\not\equiv p' -p\pmod{4} \nonumber \\
& g_3 \equiv 2 \pmod{3} \label{eq:g_3}.
\end{align}
Therefore, we have

\begin{align}
 \gcd(g_3,k)&=1, \label{eq:A1} \\
  \gcd(g_3,k-1)&=1, \label{eq:A2} \\
 \gcd(p-p'+g_3,k)&\in\{1,2\}. \label{eq:A3}
\end{align}
	
Now, by Chinese Remainder Theorem, there exists some integer $ l_3 $, $g_3\leq l_3=O(g_3k)  $, such that  
\begin{align}
&l_3\equiv p' \pmod{k},\label{eq:l_3}\\
&p+l_3 (k-1) \equiv 0 \pmod{g_3}, \nonumber \\
&\text{if } k \equiv 2\pmod 3 \text{ and } p\equiv 1\pmod 3, \text{then }  l_3\equiv 1\pmod 3 . \label{eq:l_33}
\end{align}
(Note that such $ l $ exists since \eqref{eq:A1} and \eqref{eq:A2} hold.)

The obtained $ g_3,l_3 $ clearly satisfy Conditions (a$'$) and (c$'$). Now, we check that Condition (b$'$) holds. If $ k\equiv 2 \pmod{3} $, there is nothing to show. If $ k\equiv 0\pmod{3} $, then $ l_3\equiv p'\equiv p\pmod{3} $. Thus, $ p+l_3(k-1)\equiv 0\pmod{3} $. Finally, if $ k\equiv 1\pmod{3} $, then $ p\not \equiv 2\pmod 3 $. If $ p \equiv 0\pmod{3}  $, then again $ p+l_3(k-1)\equiv 0\pmod{3} $ and if $ p \equiv 1\pmod{3}  $, then since $ g_3\equiv 2\pmod{3} $, we have $  p+l_3(k-1)+g_3\equiv 0\pmod{3}  $. Therefore, (b$'$) holds and hence, the graph $ G^{g_3,l_3} $ exists and admits a $K_3$-decomposition. Let $ G_3 $ be the disjoint union of $ t $ copies of $ G^{g_3,l_3} $, where $ t $ is an integer which is determined as follows.  Clearly, $ G_3 $ satisfies Conditions (a)-(c). We are going to determine $ t $ such that Condition (f) holds. Note that $ n_3= |V(G_3)|= t (p+l_3(k-1)+g_3) $. Thus,
\[n_3(l_3-1)\equiv t (p+l_3(k-1)+g_3)(l_3-1)\equiv t(p-p'+g_3)(p'-1) \pmod{k}.  \]
Now, note that if $k  $ is even, then $ n, \beta/(k-2) $ and so $ q $ are even as well. Also, if 
$ k\equiv 0 \pmod 3$ and $ p\equiv 1 \pmod 3 $, we have $ \beta \equiv 0\pmod 3 $ and so $ q \equiv 0\pmod 3 $. Also, $ p'\in\{0,2,4\} $. This along with \eqref{eq:A3} ensure that $ \gcd((p-p'+g_3)(p'-1),k) $ divides $ q $. Therefore, there exists some integer $t $,  $1\leq t\leq k $, such that $ t(p-p'+g_3)(p'-1)\equiv q \pmod{k} $ and thus (f) holds.  

It should be noted that $ l_3=O(g_3k)=O(k^{1+o_k(1)}) $ and $ t=O(k) $. Thus, $ n_3=t (p+l_3(k-1)+g_3) = O(k^{3+o_k(1)})$. Moreover, in order to verify (d), we will need the following two facts about $ n_3 $.  \\

\textbf{Fact 1. } If either $ k\equiv 1 \pmod 3 $ and $ p\equiv 1 \pmod 3 $, or $ k \equiv 0 \pmod 3$ and $ p\not \equiv 1 \pmod 3 $, or $ k\equiv 2 \pmod 3 $ and $ p\equiv 1 \pmod 3  $, then $ n \equiv n_3 \pmod 3 $.

To see this, first suppose that $ k\equiv 1 \pmod 3 $ and $ p\equiv 1 \mod 3 $. Then, $ (n-1)(n-2) \equiv p(k-2) \pmod 3 \equiv 2 \pmod 3 $. Thus, $ n\equiv 0 \pmod 3 $. Also, by \eqref{eq:g_3}, $g_3\equiv 2 \pmod{3}$. Therefore, $ n_3=t (p+l_3(k-1)+g_3) \equiv 0\pmod{3} $. 

Now, suppose that $ k \equiv 0 \pmod 3$ and $ p\not \equiv 1 \pmod 3 $. Then, $2q\equiv \beta \equiv 2n(p-1) \pmod 3  \pmod 3 $. So, $ q\equiv n(p-1) \pmod 3 $.  On the other hand, 
by (f), we have $ n_3(l_3-1)\equiv q \pmod 3 $ and by \eqref{eq:l_3}, $ l_3\equiv p'\equiv p\pmod{3} $. Thus, 
$ n_3(p-1)\equiv q \equiv n(p-1) \pmod 3 $. Now since $ p\not \equiv 1 \pmod 3 $, we have $ n \equiv n_3 \pmod 3 $. 

Finally, suppose that  $ k\equiv 2 \pmod 3 $ and $ p\equiv 1 \mod 3  $. Then, $ n\equiv 2 \pmod 3 $. 
Also, by \eqref{eq:g_3} and \eqref{eq:l_33}, $ n_3= t(p+l_3(k-1)+g_3)\equiv t\pmod 3 $. If $ n_3\equiv 2\pmod 3 $, we are done. Suppose that $ n_3\not \equiv 2\pmod 3 $. So, we may replace $ t $ with either $ t+k $ or $ t+2k $ such that $ n_3\equiv t\equiv 2\pmod 3  $.  \\

\textbf{Fact 2. } If $ k $ and $ p $ are both even, then $ n \equiv n_3 \pmod 2 $. 

To see this, note that since $ k $ is even, $ n $ and so $ q $ are also even. Further, by \eqref{eq:l_3} and since $p'\in\{0,2,4\}  $, we have $ l_3 $ is even as well. Thus, by (f), $ n_3 $ is also even.

\paragraph{Construction of $ G_1$.}
If either $ k\equiv 2 \pmod 3 $, or $ k\equiv 1 \pmod 3 $ and $ p\equiv 0 \pmod 3  $, or $ k\equiv 0 \pmod 3 $ and $ p\not\equiv 2 \pmod 3 $, then $ (g,l)=(1,1) $ satisfies Conditions (a$ ' $)-(c$ ' $), thus, set $ G_1= G^{1,1} $. 

Now, suppose that either $ k,p\equiv 1\pmod 3 $, or $ k\equiv 0\pmod 3 $ and $ p\equiv 2\pmod 3 $.
If $ p+k $ is odd, then $ (g,l)=(2,1) $ satisfies Conditions (a$ ' $)-(c$ ' $), thus, set $ G_1= G^{2,1} $. Now, assume that $ p+k $ is even (i.e. both $k$ and $p$ are even).  

If $k,p\equiv 1 \pmod 3$, then set $g_1=2$ and $l_1=2d$ such that $1\leq d\leq k$ is an integer where $p-2d+2\equiv 0\pmod k$ (such $d$ exists because both $p-2d+2$ and $k$ are even). It is evident that $(g,l)=(g_1,l_1)$ satisfies Conditions (a$'$)-(c$'$). Now, set $G_1=G^{g_1,l_1}$. Also, $l_1=O(k)$ and $n_1=l_1(k-1)+p+2=O(k^2)$ and (e) holds. Finally, $n_1\equiv p-2d+2\pmod k\equiv 0\pmod k$ and so, (f) holds.

Analogously, if $k\equiv 0\pmod 3$ and $p\equiv 2 \pmod 3$, then set $g_1=1$ and $l_1=3d$ such that $1\leq d\leq k$ is an integer where $p-3d+1\equiv 0\pmod k$ (such $d$ exists because both $p-3d+1$ and $k$ are divided by $3$). It is evident that $(g,l)=(g_1,l_1)$ satisfies Conditions (a$'$)-(c$'$). Now, set $G_1=G^{g_1,l_1}$. Also, $l_1=O(k)$ and $n_1=l_1(k-1)+p+1=O(k^2)$ and (e) holds. Finally, $n_1\equiv p-3d+1\pmod k\equiv 0\pmod k$ and so, (f) holds.

\paragraph{Construction of $G_2 $.}
Now, we describe the construction of $G_2$ in the following two cases. \\

\textbf{Case 1.}  Suppose that either $ k\equiv 2\pmod 3  $ or $ k\equiv 1\pmod 3 $ and $ p\equiv  0\pmod 3 $, or $ k\equiv 0 \pmod 3 $ and $ p\equiv 1 \pmod 3 $. \\

\textbf{Subcase 1.2.} $p+k$ is odd.

In this case, $ (g,l)=(2,1) $ satisfies Conditions (a$ ' $)-(c$ ' $). Set $ G_2= G^{2,1} $. Thus, $ \gcd(n_1,n_2)=\gcd(p+k,p+k+1)=1 $. So, Conditions (d)-(f) obviously hold. \\

\textbf{Subcase 1.1. } $ p+k $ is even (i.e. $ p $ and $ k $ are both even).

Set $ g_2=3 $ and let $ d\in\{0,1,2\} $ be such that $ p+d(k-1)\equiv 0\pmod 3 $. Let $ p_1,\ldots, p_t $ be all distinct prime factors of $ n_1/\gcd(n_1,k) $ except $ 2$ and $3 $. Also, let $ l_2=3x+d $, where $ x =O(k^{1+o_k(1)}) $ is a positive integer satisfying the following conditions (which exists due to \pref{lem:number}). 

\begin{align}
&l_2-1=3x+d-1\equiv 0\pmod {2k},\label{eq:l_21}\\
&n_2=(k-1)3x+(k-1)d+p+3 \not \equiv 0\pmod {p_i}\quad \forall 1\leq i\leq t,\label{eq:l_22}\\
&\text{if }\  k\equiv 2 \pmod 3 , \text {then }\ n_2=(k-1)3x+(k-1)d+p+3 \equiv 3\pmod {9}. \label{eq:l_23}
\end{align} 

(Note that such number $ x $ exists because if $ p_i $ divides $ k-1 $, then $ p_i $ divides $ n_1-k+1=p+1 $. So, $ p_i $ does not divide $ p+3 $).

Now, we prove that $ p+l_2(k-1)\equiv 0\pmod 3 $. It is obvious when $ k\equiv 1\pmod 3 $ and $ p\equiv 0\pmod 3 $. If $ k\equiv 2\pmod 3 $, then by \eqref{eq:l_23}, $ n_2=p+l_2(k-1)+3\equiv 0\pmod 3 $. If $ k\equiv 0\pmod 3 $ and $ p\equiv 1\pmod 3 $, then by \eqref{eq:l_21}, $ l_2\equiv 1 \pmod 3 $ and thus, $ p+l_2(k-1)\equiv 0\pmod 3 $. Hence, $(g,l)=(g_2,l_2)$ satisfies Conditions (a$'$)-(c$'$).

Now, we prove that (d) holds. Let $ \hat{p}\neq 2,3 $ be a prime factor of $ n_1=p+k $. If $ \hat{p} $ does not divide $ \gcd({n_1,k}) $, then by \eqref{eq:l_22}, $ \hat{p} $ does not divide $ n_2$. If $ \hat{p} $ divides $ \gcd({n_1,k}) $, then $ \hat{p} $ divides $ p $ and by \eqref{eq:l_21}, $ l_2\equiv 1\pmod {\hat{p}} $. Thus, $ n_2= (k-1)l_2+p+3 \equiv 2\pmod {\hat{p}} $ and so, $ \hat{p} $ does not divides $ n_2 $. Also, by \eqref{eq:l_23}, if $k\equiv 2 \pmod{3}$, then $ 9 $ does not divides $ n_2 $. Finally, since $ l_2\equiv 1 \pmod 4 $, $ n_2-n_1= (k-1)l_2+p+3-k-p\equiv 2 \pmod 4 $. Hence, if $ k\equiv 2\pmod 3 $ and $ p\equiv 1\pmod 3 $, then $ \gcd(n_1,n_2)=6 $ and otherwise $ \gcd(n_1,n_2)=2 $. Hence, by Facts 1,2, $ \gcd(n_1,n_2) $ divides $ n-n_3 $ and (d) holds.  

Finally, $l_2=O(k^{1+o_k(1)})$ and $n_2=(k^{2+o_k(1)})$ and thus, (e) holds. Moreover, by \eqref{eq:l_21}, $l_2-1\equiv 0\pmod k$ and thus, (f) holds. \\

\textbf{Case 2. } Now, suppose that either $ k,p\equiv 1 \pmod 3 $, or $ k\equiv 0 \pmod 3 $ and $ p\not \equiv 1 \pmod 3 $. \\

Let $ p_1,\ldots, p_r$, $ p_{r+1},\ldots,p_{r+s} $ and $ p_{r+s+1},\ldots,p_{r+s+t} $ be the prime powers except powers of $ 2,3 $ in $ n_1,k $ and $ k-1 $, respectively. 
Now, by \pref{lem:number}, there exists a positive integer $ g_2=O(k^{o_k(1)}) $, such that 
\begin{align}
&g_2\not\equiv 0 \pmod{p_i},\ 1\leq i\leq r+s+t \nonumber \\
&g_2\not \equiv -p \pmod{p_i},\ 1\leq i\leq r \nonumber \\
&g_2\not \equiv -p+1 \pmod{p_i},\ 1\leq i\leq r \nonumber\\
& g_2 \equiv 4-k-p \pmod 9\label{eq:g_24} \\
& g_2\equiv 1\pmod 2 \text{ and }  g_2\not \equiv  1-k-p \pmod 4 \label{eq:g_25}
\end{align}

By the last two conditions, we have $ \gcd(g_2,6)=1 $. Thus, we have

\begin{align}
 \gcd(g_2,k)&=1, \label{eq:i} \\
\gcd(g_2,k-1)&=1, \label{eq:ii} \\
\gcd(g_2,n_1)&=1, \label{eq:iii} \\
\gcd(g_2+p,n_1)&=2^a3^b,\text{ for some integers }  a,b, \label{eq:iv} \\
\gcd(g_2+p-1,n_1)&=2^{a'}3^{b'}, \text{ for some integers } a',b'. \label{eq:v}
\end{align}

Now, let $ d $, $ 1\leq d\leq g_2 $, be an integer such that $ p+d(k-1)\equiv 0 \pmod {g_2} $ (which exists because of \eqref{eq:ii}). Also, let $ l_2= xg_2+d $, where positive integer $ x $ will be determined shortly. Define $ G_2= G^{g_2,l_2} $. Then, $ n_2= p+l_2(k-1)+g_2= g_2(k-1)x+d(k-1)+g_2+p $. We are going to choose $ x $ such that $ l_2-1 \equiv 0\pmod {\lcm(k,36)} $ and $ \gcd(n_1,n_2)\in\{1,3\} $. 
To do this, let $ p_1,\ldots, p_t $ be all distinct prime factors of $ n_1/ \gcd(n_1,k(k-1)) $ other than $ 2,3 $. Note that by \pref{lem:number}, there exists an integer $ x=O(k^{1+o_k(1)}) $ satisfying the following.

\begin{align}
&l_2-1=g_2x+d-1 \equiv 0 \pmod {\lcm(36,k)},\label{eq:l21}\\
&n_2=g_2(k-1)x+d(k-1)+g_2+p \not\equiv 0 \pmod {p_i}, \ 1\leq i \leq t. \label{eq:l22}
\end{align}

(Such $ x $ exists because $ \gcd(g_2,6kn_1)=1 $ and $ \gcd(p_i,k-1 )=1$, $ 1\leq i\leq t $.)

Note that by the choice of $l_2$,  the pair $(g,l)=(g_2,l_2)$ satisfies Condition (a$'$). Also, $ n_2= l_2(k-1)+p+g_2 \equiv k-1+p+g_2\equiv 0 \pmod 3 $. Thus, Condition (b$'$) holds as well. Moreover, $l_2\geq g_2$ and thus, (c$'$) holds.

Now, we prove that $ \gcd(n_1,n_2) $ divides $ n-n_3 $. Let $ p'\neq 2,3 $ be a prime factor of $ n_1 $. If $ p' $ is not a prime factor of both $ k $ and $k-1$, then by \eqref{eq:l22}, $ p' $ is not a prime factor of $ n_2 $. Now, suppose that  $ p'$ is a prime factor of $ \gcd(n_1,k) $. Then 
$ n_2 = l_2(k-1)+g_2+p \equiv g_2+p-1 \pmod {p'} \not\equiv 0 \pmod {p'}  $ (because of \eqref{eq:v}).
Finally, suppose that  $ p'$ is a prime factor of $ \gcd(n_1,k-1) $. Then, $n_2 = l_2(k-1)+g_2+p \equiv g_2+p \pmod {p'} \not\equiv 0 \pmod {p'}  $ (because of \eqref{eq:iv}).
This implies that $ p' $ does not divide $ \gcd(n_1,n_2) $. 
Also, $ n_2 = l_2(k-1)+g_2+p \equiv k-1+g_2+p \pmod 9 \not\equiv 0 \pmod 9  $ (because of \eqref{eq:g_24}). Similarly, due to \eqref{eq:g_25}, $n_2\not\equiv 0\pmod 4$. 
Moreover, note that $ l_2 $ and $ g_2 $ are odd, so the parity of $ n_2 $ and $p+k$ is the same. Therefore, if $p+k$ is odd, then $\gcd(n_1,n_2)\in \{1,3\} $ and if $p+k$ is even, then $\gcd(n_1,n_2)\in \{1,2,3,6\}$. Hence, by Facts 1 and 2, $ \gcd(n_1,n_2) $ divides $ n-n_3 $ and (d) holds. 

Finally, since $ l_2=xg_2+d $ and $ n_2=l_2(k-1)+p+g_2 $, we have $ l_2=O(k^{1+o_k(1)}) $ and $ n_2=O(k^{2+o_k(1)}) $ and hence (e) holds. Moreover, due to \eqref{eq:l21}, (f) holds. This completes the proof. 
 	\end{proof}

\begin{rem} \label{rem:equality2}
	It is noteworthy that for every integer $k\geq 8$, $k\neq 15$, there are infinitely many $n$ such that $(n-2)\equiv 0 \pmod{k-2}$ and $(n-1)(n-2)\not\equiv 0 \pmod{(k-1)(k-2)}$ and we have $D(n,k,3)=J'(n,k,3)$. To see this, let $\beta$ and $p$ be as in the proof of \pref{thm:lower_r0a!0} and note that for the following cases 
\begin{align*}
& k\equiv 2\pmod 3,\ p\equiv 2 \pmod 3, or \\
& k\equiv 1\pmod 3,\ p\equiv 0 \pmod 3, or \\
& k\equiv 0\pmod 3,\ p\equiv 1 \pmod 3, 
\end{align*}
the multigraphs $G_1=G^{g_1,l_1}$ and $G_2=G^{g_2,l_2}$ with $(g_1,l_1)=1$ and $(g_2,l_2)=(2,1)$ whenever $p+k$ is odd and $(g_2,l_2)=(3,1)$ whenever $p+k$ is even, satisfy Conditions (a)-(f). If, in addition, $\beta=0$, then we may assume that $G_3$ is empty. Thus, in the computations of \eqref{eq:edges}, we have $2|E(G)|= n(p+k-1)(k-2)$ and by \pref{lem:multigraph3}, we have $D(n,k,3)=J'(n,k,3)$ when $n$ is sufficiently large. So, it remains to prove that there are infinitely many $n$ such that $\beta =0$ and $p$ satisfies the above condition. To see this, let $n_0=a_1(k-1)(k-2)+a_0(k-2)+2$ such that $a_0(1-a_0) \equiv p \pmod{k-1}$, $1\leq p\leq k-2$, and $p$ satisfies the above condition (for instance whenever $k\equiv 2\pmod 3$, set $a_0=2$, whenever $k\equiv 1\pmod 3$, set $a_0=3$ and whenever $k\equiv 0\pmod 3$, let $a$ be the smallest integer satisfying $(a-1)(a-2) \leq k-1< a(a-1)$ and set either $a_0= a$ or $a_0=a+1$). Also, suppose that $a_1(k-1)+a_0\equiv 1 \pmod k$. Then, $n_0\equiv 0\pmod k$. Now, we may choose sufficiently large $n$ such that $n\equiv n_0 \pmod {k(k-1)(k-2)}$ and thus, $\beta=0$.
\end{rem}

\section{Concluding Remarks} \label{sec:conc}

In this paper, we gave some upper and lower bounds for $D(n,k,3)$. In \eqref{eq:bo1} and \eqref{eq:bo2}, the upper and lower bounds differ by at most $O(k^{3+\epsilon})$. One may see that there exist some cases where the upper bound is not met (for instance for $n\equiv 8,11 \pmod{ 60} $, we have $D(n,5,3)\leq J'(n,5,3)-1$). Nevertheless, dependency of the lower bound on $k$ remains as an open problem. On the other hand, in \eqref{eq:bo3}, the upper and lower bounds differ by at most $O(k^2)n$. Although we proved in \pref{thm:p2} that dependency of the lower bound on $n$ is necessary for $p=2$, we believe that $p=2$ is the only exception and the lower bound does not depend on $n$ for $p\neq 2$. So, we raise the following conjecture. 
\begin{con}\label{con:1}
There is a constant $c$ and for every integer $k$ there is an integer $n_0$ such that the following statement holds for every $n\geq n_0$. 

Suppose that $ (n-2)\equiv 0 \pmod {k-2} $ and $ (n-1)(n-2)\equiv  p(k-2)\pmod {(k-1)(k-2)} $, for some integer $p$, $1\leq p\leq k-2$. If $p\neq 2$, then $D(n,k,3)\geq J'(n,k,3)-O(k^c)$.  
\end{con}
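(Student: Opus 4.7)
My plan is to stay within the framework of \pref{lem:multigraph3} and \pref{thm:lower_r0a!0}: it suffices to construct, for every sufficiently large $n$, a multigraph $G$ on $n$ vertices satisfying Conditions (ii)--(v) with $2|E(G)|=\beta+n(p+k-1)(k-2)+O(k^c)\cdot k(k-1)(k-2)$. Inspecting \eqref{eq:edges}, the error term scales linearly in $n$ \emph{only} because copies of $G_1$ are introduced $r_1=\Theta(n)$ times while $l_1>1$ in the bad cases. Consequently, the conjecture reduces to producing, for each bad pair $(k,p)$ with $p\neq 2$, a new building block $G_1$ on $O(k^c)$ vertices with uniform edge multiplicity $k-2$, common vertex degree $(p+k-1)(k-2)$---that is, with $l_1=1$---and admitting a distinct $K_3$-decomposition. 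Once $G_1$ is in hand, the correction blocks $G_2,G_3$ can be chosen as in the proof of \pref{thm:lower_r0a!0} so that $\gcd(n_1,n_2)\mid n-n_3$ and the divisibility book-keeping in item~(f) still closes up, paying only an $O(k^c)$ additive loss.

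Within the single-GDD family $G^{g,l}$, setting $l=1$ forces a simple $(3,k-2)$-GDD$(g^u)$ with $u=(p+k-1)/g+1$, and in the bad cases the parities of $k,p$ together with the upper bound $g\le p+1$ from \pref{cor:simpleGDD} make Condition (b$'$) of the proof of \pref{thm:lower_r0a!0} fail. To escape this, I would enlarge the admissible family of building blocks in two ways. First, permit $G_1$ to arise from a simple $(3,k-2)$-GDD of \emph{mixed} type $g_1^{u_1}g_2^{u_2}$ and then reintroduce intra-group edges at controlled multiplicity to recover vertex-regularity; the extra parameter can absorb the $\pmod 3$ obstruction that is active when $k\equiv 1,0\pmod 3$. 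Second, and more flexibly, take $G_1$ to be the (weighted) leave of a $3$-$(v_1,k,1)$ packing on $v_1=O(k^c)$ points whose leave is $(p+k-1)(k-2)$-regular with uniform multiplicity $k-2$; the existence of such a packing follows from \pref{thm:keevash} as soon as one exhibits a $(k,3)$-divisible $3$-graph with the correct pair-degree profile and sufficiently large minimum codegree. A Chinese-Remainder-type argument in the spirit of \pref{lem:number} should then yield an admissible $v_1$ of size $O(k^c)$ for every bad $(k,p)$ with $p\neq 2$, and in principle only a finite (depending on $k$) number of congruence conditions must be simultaneously satisfied.

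The main obstacle lies in the small-$p$ boundary cases---in particular $p=4$ when $k\equiv 4\pmod 6$ and $p=8$ when $k\equiv 0\pmod 6$---where the codegree threshold of \pref{thm:keevash} may force $v_1$ to grow with $k$ faster than a universal polynomial, and where the structural rigidity that powered \pref{thm:p2} could partly persist. I expect the argument to split along this line: for $p$ at least a small polynomial in $k$, the Keevash-based construction should deliver $G_1$ directly; for each small fixed $p\neq 2$, one must build an explicit $K_3$-decomposable multigraph by hand, perhaps by iterated doubling of a mixed-type simple GDD or by combining a simple GDD with a small patch of extra triangles. The conjecture's hope that $p\neq 2$ is the only genuine exception rests on the observation that the proof of \pref{thm:p2} depended critically on the unique local structure forced when every pair-degree deficit equals $2(k-2)$: vertices of such low residual degree are essentially locked into cliques of size $k+1$ that admit essentially one regularisation, yielding the crucial Fact~2 of \pref{thm:p2}. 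For $p\ge 3$ this rigidity relaxes, non-isomorphic local leaves become available, and it is precisely this flexibility that a successful proof would have to exploit to complete the construction.
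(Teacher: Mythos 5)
The statement you were given is Conjecture~\ref{con:1}, which the paper explicitly leaves \emph{open}: Theorem~\ref{thm:lower_r0a!0} proves it in all cases except $k,p\equiv 4\pmod 6$ and $k,(p-2)\equiv 0\pmod 6$, and the authors reduce those remaining cases to the construction of the building block $G_1$ (their Conjecture~\ref{con:2}), reporting only a partial result ($k,p\equiv 4\pmod 6$ with $p=O(\sqrt{k})$) in a separate preprint. Your proposal correctly rediscovers this reduction: the linear-in-$n$ loss in \eqref{eq:bo3} comes solely from using $r_1=\Theta(n)$ copies of a block with $l_1>1$, so what is needed is a $K_3$-decomposable multigraph on $O(k^c)$ vertices with uniform edge multiplicity $k-2$ and all degrees equal to $(p+k-1)(k-2)$. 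Up to that point you are aligned with the paper.

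However, the proposal does not close the gap, and the two mechanisms you offer for actually building $G_1$ do not work as stated. The Keevash-based route is unsound: Theorem~\ref{thm:keevash} (and likewise Lemmas~\ref{lem:multigraph} and \ref{lem:multigraph2}) applies only for $n\geq n_0(q,t)$ with $q=k$, and the threshold $n_0(k)$ emerging from Keevash's proof is astronomically larger than any fixed polynomial in $k$; it therefore cannot certify a decomposable structure on $v_1=O(k^c)$ points with a universal constant $c$. The mixed-type GDD route is a reasonable idea but, as written, only restates the problem: adding intra-group edges ``at controlled multiplicity'' must preserve both the uniform multiplicity $k-2$ and the existence of a distinct $K_3$-decomposition, and the divisibility analogue of Condition (b$'$) for type $g_1^{u_1}g_2^{u_2}$ meets the same obstruction modulo $3$ that kills the single-type choice $l=1$ in the bad cases (for instance, when $p+k-1$ is a prime $\equiv 1\pmod 3$ there is no admissible group size $g$ at all); you exhibit no concrete mixed-type design that evades it. Since every remaining step is phrased as ``should'' or ``in principle,'' the attempt contains an essential gap --- indeed exactly the gap the paper identifies as the open content of Conjectures~\ref{con:1} and \ref{con:2}.
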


We proved the above conjecture for all cases but two cases $k,p\equiv 4\pmod 6$ and $k, (p-2)\equiv 0 \pmod 6$. For the remained cases, it suffices to construct the graph $G_1$ (defined in the proof of \pref{thm:upper_r0a!0}) where we were unable to build in general. 
This gives rise to the following conjecture which implies \pref{con:1}.

\begin{con}\label{con:2}
Suppose that $k,p$ are two positive integers such that $4\leq p\leq k-4$ and either $k,p\equiv 4\pmod 6$ or $k, (p-2)\equiv 0 \pmod 6$. Then, there exists a multigraph $G$ on $O(k^c)$ vertices for some constant $c$ such that the multiplicity of each edge is equal to $k-2$, the degree of each vertex is equal to $ (p+k-1)(k-2) $ $($i.e. its underlying simple graph is $(p+k-1)$-regular$)$ and also $G$ admits a distinct $ K_3 $-decomposition.
\end{con}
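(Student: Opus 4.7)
\textbf{Proof proposal for Conjecture~\ref{con:2}.}

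The natural first move is to stay inside the $G^{g,1}$ family used in the proof of Theorem~\ref{thm:lower_r0a!0}. Recall that $G^{g,1}$ is the complete multipartite graph on $u=(p+k-1)/g+1$ groups of size $g$, with each edge replaced by $k-2$ parallel edges; its underlying simple graph is $(p+k-1)$-regular. By the discussion around Corollary~\ref{cor:simpleGDD}, $G^{g,1}$ admits a distinct $K_3$-decomposition whenever a simple $(3,k-2)$-GDD of type $g^u$ exists, which in turn requires $g\mid(p+k-1)$, $2\le g\le p+1$, and $(k-2)g^2u(u-1)\equiv 0\pmod 3$. In both subcases of the hypothesis one has $k-2,\,p+k-1\not\equiv 0\pmod 3$, so the divisibility reduces to $u\equiv 0\pmod 3$, equivalently (since $p+k-1\equiv 1\pmod 3$) to $g\equiv 2\pmod 3$. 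Hence the conjecture is already settled whenever $p+k-1$ admits a divisor $g$ with $g\equiv 2\pmod 3$ and $2\le g\le p+1$: take $G=G^{g,1}$, which has $O(k)$ vertices.

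The real obstruction --- and the reason the statement is left as a conjecture --- is the complementary case where every small divisor of $p+k-1$ is $\equiv 1\pmod 3$. This happens for instance whenever $p+k-1$ is itself a prime $\equiv 1\pmod 3$ exceeding $p+1$; the smallest toy instance is $k=10$, $p=4$, giving $p+k-1=13$. For such $(k,p)$ the multipartite template simply cannot yield an $l=1$ construction, so a genuinely new building block is needed.

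To handle the hard case my plan is to pass to a \emph{larger} simple triple system and perform a controlled deletion. By Theorem~\ref{thm:simpleTS}, a simple $(v,3,k-2)$-BIBD exists whenever $v\ge k$ and $3\mid v(v-1)$; in both subcases $v=p+k+4$ satisfies this ($v\equiv 0\pmod 3$ and $v$ even, restoring the missing factor of $3$). The underlying graph $K_{v}$ is $(p+k+3)$-regular, which is four too high, so I would try to delete a set $S$ of design blocks such that the set $E^-$ of edges losing \emph{all} of their $k-2$ blocks is exactly a $4$-regular spanning subgraph of $K_v$, while every surviving edge still lies in all $k-2$ of its original blocks. For this dichotomy to be self-consistent, $E^-$ must be \emph{block-closed} in the design: if $xy\in E^-$ and $\{x,y,z\}$ is a block, then $xz,yz\in E^-$ as well. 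If a block-closed $4$-regular subgraph exists on $v=p+k+4$ vertices, multiplying the surviving simple structure by $k-2$ yields the desired $G$. If not, enlarge $v$ by polynomial steps (preserving $3\mid v(v-1)$ and the parity of $v-p-k$) to gain enough slack; any polynomial bound suffices for the conjecture.

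The main obstacle is precisely the block-closure requirement. A generic simple BIBD contains no regular block-closed subgraph of prescribed small degree: the easy block-closed sets are flats (complete sub-designs on a subset $V'\subseteq V$), and these force $E^-$ to be concentrated on $V'$ rather than spread out as a $4$-regular subgraph, since a $(5,3,k-2)$-sub-BIBD, which would give exactly degree $4$ per point of $V'$, does not exist when $k-2\not\equiv 0\pmod 3$. Therefore a successful proof will likely need either (a) an explicit algebraic construction of a simple $(3,k-2)$-design carrying a suitable block-closed regular subgraph --- for instance arising from an affine or projective geometry over $\mathbb{F}_q$, or from a sharply transitive Kirkman-like structure --- or (b) a quantitative strengthening of Lemma~\ref{lem:multigraph2} with explicit polynomial dependence of the constants $c,\,n_0$ on $\lambda'=k-2$, which would allow a random $(p+k-1)$-regular simple graph on $O(k^c)$ vertices to be completed directly. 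Either route should settle Conjecture~\ref{con:2}, but both require tools that go beyond those developed in the present paper.
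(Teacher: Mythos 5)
First, a framing point: \pref{con:2} is an \emph{open conjecture} in this paper --- the authors give no proof of it (they only remark that the subcase $k,p\equiv 4\pmod 6$ with $p=O(\sqrt{k})$ is settled in a separate preprint), so there is no paper proof to compare against; and your proposal, which itself concludes that both suggested routes ``require tools that go beyond those developed in the present paper,'' is not a proof either. That said, your opening analysis is correct and accurately reconstructs why the case is left open in \pref{thm:lower_r0a!0}: in both hypotheses $k-2\not\equiv 0\pmod 3$ and $p+k-1\equiv 1\pmod 3$ is odd, so the template $G^{g,1}$ requires a divisor $g$ of $p+k-1$ with $g\equiv 2\pmod 3$ and $2\le g\le p+1$, and such a divisor need not exist (your witness $k=10$, $p=4$, $p+k-1=13$ is valid).

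The genuinely new idea --- deleting blocks from a simple $(3,k-2)$-design on $v=p+k+4$ points so that the set $E^-$ of edges losing all $k-2$ of their blocks is a $4$-regular spanning subgraph --- has a concrete, fatal flaw rather than a mere difficulty. As you note, $E^-$ must be block-closed; but block-closure forces every vertex meeting $E^-$ to have $E^-$-degree at least $k-1$: if $xy\in E^-$, the $k-2$ blocks through $\{x,y\}$ have pairwise distinct third points $z$ (the design is simple), each $xz$ loses a block and hence must itself lie in $E^-$, giving $\deg_{E^-}(x)\ge k-1$. Since the required per-vertex deficit is $v-1-(p+k-1)=4<k-1$, no admissible $E^-$ exists for $k\ge 6$, for any choice of $v$ with constant deficit. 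Enlarging $v$ so that the deficit reaches $k-1$ or more does not rescue the plan either: the only block-closed sets achieving a constant regular degree are unions of vertex-disjoint sub-designs (flats), and deleting those leaves precisely a complete multipartite underlying graph --- i.e.\ the construction collapses back to the $G^{g,l}$ family you started from, with the same divisibility obstruction. So the deletion route is dead on arrival, and \pref{con:2} remains open after your proposal; the honest content of your write-up is the (correct) diagnosis of the obstruction plus two speculative directions, not a proof.
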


The authors have proved \pref{con:2} for $k,p\equiv 4\pmod 6$ when $p=O(\sqrt{k})$ \cite{javadi}. \\
It is also noteworthy that the methods proposed in this paper can also be applied in studying $t$-covering and $t$-packing problems for general $t$ which might be the object of future work.\\


%

\begin{thebibliography}{100}
	\bibitem{bao}
	J. Bao, L. Ji, 
	\newblock
	The completion determination of optimal (3,4)-packings.
	\newblock
	\textit{Des. Codes Cryptogr.} 77:217-–229 (2015). 
	
	\bibitem{caro}
	Y. Caro and R. Yuster,
	\newblock
	Covering graphs: The covering problem solved. 
	\newblock
	\textit{Journal of Combinatorial Theory, Series A} 83:273--282 (1998). 
	
	\bibitem{caro2}
	Y. Caro and R. Yuster,
	\newblock
	Packing graphs: The packing problem solved. 
	\newblock
	\textit{Electronic Journal of Combinatorics} 4.R1 (1997). 
	
	
	\bibitem{dehon}
	M. Dehon. On the existence of 2-designs $S_\lambda(2,3,v)$  without repeated blocks.   
	\textit{Discrete Math.}  43:155-171  (1983).
	
	\bibitem{numbert}
	P. Dusart, 	Explicit estimates of some functions over primes, 
	\textit{The Ramanujan Journal} 45:227--251 (2018).
	
\bibitem{erdosgallai}
	P. Erd\H os and T. Gallai, Graphs with prescribed degrees of vertices. 
	\textit{Mat. Lapok} 11:264--274 (1960).
	
	\bibitem{erdoshanani}
	P. Erd\H{o}s and H. Hanani,
	\newblock On a limit theorem in combinatorial analysis,
	\newblock \textit{Publicationes Mathematicae Debrecen} 10:10--13 (1963).

\bibitem{kuhn}
	S. Glock, D. K\"{u}hn, A. Lo and D. Osthus, The existence of designs via iterative absorption, \url{arXiv:1611.06827}.
	
	\bibitem{gustavsson}
	T. Gustavsson,
	\newblock
	Decompositions of large graphs and digraphs with high minimum degree,
	\newblock
	\textit{Doctoral Dissertation}, University of Stockholm (1991).
	
	\bibitem{javadi}
	R. Javadi and E. Poorhadi, packing of triples II, preprint. 
	
	\bibitem{johnson}
	S. M. Johnson, A new upper bound for error-correcting codes, 
	\textit{IRE Trans. Inform. Theory}, vol. IT-8, 203--207 (1962).
	
	\bibitem{keevash}
	P. Keevash,
	\newblock
	The existence of designs,
	\newblock \url{arXiv:1401.3665}, 2014.
	
	\bibitem{lei}
	J. Lei. Completing the spectrum for LGDD$(m^v)$.   
	\textit{J. Combin. Des.}  5:1--11 (1997).
	
	\bibitem{chini}
	X. Niu, H. Cao and R. Javadi, The spectrum for  large sets of $(3,\lambda)$-GDDs of type $g^u$, \newblock 
	\textit{J. Combin. Des.} 26:616--639 (2018).
	
	\bibitem{rodl}
	V. R\"{o}dl, 
	\newblock On a packing and covering problem, 
	\newblock \textit{European J. Combinatorics} 6:69--78 (1985).
	\end{thebibliography}
\providecommand{\bysame}{\leavevmode\hbox to3em{\hrulefill}\thinspace}
\providecommand{\MR}{\relax\ifhmode\unskip\space\fi MR }
\providecommand{\MRhref}[2]{%
	\href{http://www.ams.org/mathscinet-getitem?mr=#1}{#2}
}
\providecommand{\href}[2]{#2}

\end{document}